\newcommand{\p}{\partial}
\newcommand{\la}{\langle}
\newcommand{\ra}{\rangle}
\newcommand{\e}{\varepsilon}
\newcommand{\eps}{\varepsilon}
\newcommand{\kr}{{\mathfrak r}}
\newcommand{\kb}{{\mathfrak b}}
\newcommand{\bnabla}{ \overline{\nabla}}
\newcommand{\be}{\begin{equation}}
\newcommand{\ba}{\begin{aligned}}
\newcommand{\bee}{\begin{equation*}}
\newcommand{\ee}{\end{equation}}
\newcommand{\ea}{\end{aligned}}
\newcommand{\eee}{\end{equation*}}
\newcommand{\bea}{\begin{equation} \begin{aligned} }
\newcommand{\eea}{\end{aligned}\end{equation} }
\theoremstyle{plain}
\newtheorem{theorem}{Theorem}[section]
\newtheorem{lemma}[theorem]{Lemma}
\newtheorem{prop}[theorem]{Proposition}
\newtheorem{conjecture}[theorem]{Conjecture}
\theoremstyle{remark}
\newtheorem{remark}[theorem]{Remark}
\newtheorem{example}[theorem]{Example}
\theoremstyle{definition}
\newtheorem{definition}[theorem]{Definition}
\numberwithin{equation}{section}
\title[Hypersurfaces translating by powers of Gauss curvature]{On existence of hypersurfaces translating by powers of Gauss curvature}
\author{Beomjun Choi}
\address{Beomjun Choi: Department of Mathematics, POSTECH, Pohang, Gyeongbuk, Republic of Korea}
\email{bchoi@postech.ac.kr}
\date{}
\begin{document}

\begin{abstract} In this paper we construct complete convex hypersurfaces in $\mathbb R^{n+1}$ which translate under the flow by powers $\alpha \in (0, \frac1{n+2})$ of the Gauss curvature. The level set of each solution is asymptotic to a shrinking soliton for the flow by power $\frac \alpha {1-\alpha}$ of the Gauss curvature in $\mathbb R^n$. For example, our construction reveals the existence of translators whose level set converges to the sphere, simplex, hypercube and so on. The translating solitons exist as a family whose parameters correspond to Jacobi fields, solutions to linearized equation around the asymptotic profile.\end{abstract}
 
\maketitle 
\tableofcontents

\section{Introduction}

One-parameter family of convex embedded hypersurfaces $\Sigma_t$ in $\mathbb{R}^{n+1}$ is a solution to the $\alpha$-Gauss curvature flow if 
\[\partial_t X = - K^\alpha \nu .\]
Here, $X:[0,T]\times M \to \mathbb{R}^{n+1}$ is the family of embeddings with $X(t, M)=\Sigma_t$, $K$ is the gauss curvature at $X\in M_t$, and $\nu$ is the outward unit normal vector at $X\in M_t$. 

There are two types of special solutions which attract attentions among researchers; the shrinking soliton and the translating solitons. In this paper, we aim to reveal a connection between two solutions. More precisely, we construct translating solitons in $\mathbb{R}^{n+1}$ whose level sets are asymptotic to shrinking solitons in $\mathbb{R}^n$.

A shrinking soliton refers a solution which homothetically shrinks to the origin. Upto a rescaling, shrinking soliton satisfies $ \la X ,\nu \ra = K^\alpha $. Indeed if $\Sigma_0$ satisfies the equation, then $\Sigma_t$ shrinks by $\Sigma_t=(1- (1+n\alpha) t)^{\frac{1}{1+n\alpha}}\Sigma_0.$ Shrinking solitons play an important role as they models singularity formations of flows starting from closed hypersurface for affine-super-critical case $\alpha\in(\frac{1}{n+2},\infty)$ \cite{andrews2016flow}\cite{guan2017entropy}. Later \cite{brendle2017asymptotic} showed the round sphere is the only closed shrinking soliton in this case so the asymptotic behavior of closed solutions are now understood. In the affine-critical $\alpha=\frac{1}{n+2}$, the solutions have affine invariance and  \cite{Ca72} showed shrinking solitons are ellipsoids. Later, the convergence of $\frac{1}{n+2}$-GCF to an ellipsoid was shown by \cite{AC}. Shrinking solitons for sub-affine-critical $\alpha\in (0,\frac{1}{n+2})$ are not exactly classified except one dimensional curves in $\mathbb{R}^2$ by \cite{andrews2003classification}. Such a classification is a big open question. See \cite{andrews2000motion} (or see Theorem \ref{theorem-andrews2000}) for constructions of some shrinking solitons in higher dimensions. Next, a translating soliton refers a solution which moves in a constant velocity. Upto a rescaling and rotation, such a solution satisfies $ \la \nu, -e_{n+1}\ra = K^\alpha$ and in this case the solution moves by $\Sigma_t= \Sigma_0 +te_{n+1}$. For other geometric flows such as the Ricci flow and the mean curvature flow, translating solitons also appear as singularity models for degenerate neck-pinches, or more generally type-II singularities. Since solutions to sub-affine-critical GCF is expected to degenerate generically (see this is true when $n=1$ \cite{andrews2002non}), there is huge possibility that translating solitons may appear as singularity models for sub-affine-critical flows. Moreover, they have stability in the sense that near by non-compact solutions converge to translating solitons as time goes to infinity \cite{CHOI2022108207}\cite{choi2018convergence}.

When translating solitons are written as graphs  $x_{n+1}=u(x)$, the height function $u(x)$ solve the Monge-Amp\`ere equation \[\det D^2 u = (1+|Du|^2)^{\frac{n+2}{2}-\frac{1}{2\alpha} }.\] The existence and classification of global solutions to those equations have been classical questions in the theory of Monge-Amp\`ere equation. In the affine-critical case, the equation becomes simple $\det D^2u =1$ and it is a classical result that entire convex solutions are quadratic polynomials \cite{nitsche1956elementary, jorgens1954losungen, calabi1958improper, cheng1986complete, caffarelli1995topics}; see also \cite{caffarelli2003extension, caffarelli2004liouville, jin2014liouville, jin2016solutions, li2019monge} for more Liouville theory of Monge-Amp\`ere equations.  When $\alpha>\frac{1}{2}$, the translating solitons are always convex graphs on bounded domains and \cite{U98GCFsoliton} showed each convex bounded domain admits unique translator which is a graph on this domain. For remaining ranges of alpha, $\alpha \in (0,1/2]$ and $\alpha \neq \frac{1}{n+2}$, \cite{JW} showed they are entire graphs for $\alpha<\frac{1}{n+1}$ and constructed infinitely many translators which are not translation or rotation of the others, but the classification is still an open question. Recently, for the case of surface $n=2$, the author, K.Choi and S.Kim \cite{CCK} classified all possible translating solitons for the sub-affine-critical range $\alpha \in (0,\frac{1}4)$. In  \cite{CCK}, constructed solutions are asymptotic to a $\frac{1}{1-2\alpha}$-homogeneous function, and moreover if the level curves $u(x)=l$, after a rescaling, converges to a shrinking soliton to $\frac{\alpha}{1-\alpha}$-GCF (of curves). Using the classification of shrinking curves in \cite{andrews2003classification}, we could construct all possible candidates, and conversely  every translator is exactly one of these candidates. We propose a similar classification conjecture for higher dimension in Conjecture \ref{conjecture}. In this work, we modify the construction part of the previous work \cite{CCK} and showed the existence of translators asymptotic to given homogeneous profile corresponding to a shrinking soliton in $\mathbb{R}^{n}$. The converse part, where  every soliton is shown to be asymptotic to a homogeneous profile and it is exactly one of solution already constructed, faces new challenges that we want to resolve in future research.

Our main existence result is Theorem \ref{thm-mainexistence}. In higher dimension, as graphs our translators are asymptotic to $\frac{1}{\sigma}$-homogeneous function where $\sigma$ is given in \eqref{eq-sigma A}. Since the level sets are asymptotic to shrinking solitons, we have benefits working with the representation of hypersurface by support functions at different levels $x_{n+1}=l$ (see \eqref{eq-Sltheta} for precise definition). If $S(l,\theta)$ denotes the support function of translating soliton at the height $l$ and $h(\theta)$ denotes the support function of asymptotic shrinking soliton, the solution is asymptotic to $S\approx  A l^{\sigma} h(\theta) $ as $l\to \infty$. In Theorem \ref{thm-mainexistence}, we construct family of translating solitons for each given fixed limit shrinking soliton. This family corresponds to solutions to linearized equation to the soliton equation around $Al^\sigma h(\theta)$. We call them Jacobi fields and they can be written as separation of variable form $l^{\beta^\pm_i } \varphi_i(\theta)$ where $\varphi_i(\theta)$ are eigenfunctions for the linearization of Gauss curvature around the shrinking soliton. In Section \ref{section-prelim}, we prepare necessary function spaces and spectral set-up for main analysis. We also introduce the support function and express the soliton equation in terms of this and its perturbation around homogeneous limiting profile. In Section \ref{section-main}, we first state the main Theorem \ref{thm-mainexistence}.  The proof is given at the end of Section \ref{section-main}. It basically follows similar lines of argument in \cite[Section 3]{CCK}. Here, we focus on presenting crucial ideas and methods toward the construction. With this purpose, we omit precise details of computation if they are identical to those presented in \cite{CCK}. Between the statement and the proof of Theorem \ref{thm-mainexistence}, we explain remarks and consequences of this construction. Since the shrinking solitons are not classified in higher dimension, we explain known existence result for non-radial shrinking solitons based on \cite{andrews2000motion}. Those shrinking solitons with the symmetry of simplex or hypercube mentioned in the abstract arise as bifurcations of spherical harmonics from radial shrinking soliton as $\alpha$ cross (from above to below) certain threshold exponents. 
  
 \section{Preliminaries} \label{section-prelim}
\begin{definition}[Support function at level $l$]
Let $\Sigma$ be a complete convex hypersurface in $\mathbb{R}^{n+1}$. If $\Sigma$ has non-empty level set at level $x_{n+1}=l$,  we denote the support function of this level set by $S(l,\theta)$. More precisely,
\be \label{eq-Sltheta} S (l,\theta  ) = \sup_{p \in \Sigma \cap \{x_{n+1}  = l\}} \langle p, (\theta ,0) \rangle, \ee 
for $\theta  \in \mathbb{S}^{n-1}$. Here, we view $(\theta ,0)$ as a vector in $\mathbb{R}^{n+1}$. We simply say $S(l,\theta )$ is the support function of $\Sigma$ (at level $l$). 	
\end{definition}

Throughout this paper, $\bar g$ and $\overline{\nabla}$ denotes the standard round metric on $\mathbb{S}^{n-1}$ and the corresponding Levi-Civita connection.  A convex complete hypersurface $\Sigma$ in $\mathbb{R}^{n+1}$ in is a $\beta$-translator if it is a smooth translating soliton for the $\beta$-GCF moving in $e_{n+1}$-direction with unit speed \[ K^\beta = \la  e_{n+1},-\nu \ra .\] A convex compact hypersurface $\Gamma$ in $\mathbb{R}^n$ is a $\beta$-shrinker if it is a smooth shrinking soliton for the $\beta$-GCF which satisfies \[K^\beta = \la F, \nu  \ra  .\]  Since we construct $\alpha$-translators and those solutions are asymptotic to $\frac{\alpha}{1-\alpha}$-shrinker, we will often omit the exponents when there is no confusion. i.e. a translator refers a $\alpha$-translator in $\mathbb{R}^{n+1}$ and a shrinker refers a $\frac{\alpha}{1-\alpha}$-shrinker in $\mathbb{R}^n$.

Suppose a closed smooth strictly convex hypersurface $\Gamma$ in $\mathbb{R}^n$ is parametrized by the outward normal vectors $\theta \in \mathbb{S}^{n-1}$. If the support function is $S(\theta)$, then the second fundamental form is given by $h_{ij} = \overline{\nabla}^2_{ij} S+ S \bar g_{ij}$ and the principle radii are given by the eigenvalues of $h_{ij}$ with respect to $\bar g_{ij}$ (see, for instance, \cite[Chapter 3.1]{MR1931534}.) With this in mind, it is convenient to define the following operators:

\begin{definition}
For scalar function $f$ on $\mathbb{S}^{n-1}$, we define the $(0,2)$-tensor $\kr_{ij}[f]$ on the sphere by 
${\mathfrak r} _{ij}[f] := \bnabla^2_{ij} f + f \bar g_{ij} $.
Moreover we denote the inverse of ${\mathfrak{r}}_{ij} [f]$ (if exists) by ${\mathfrak b}^{ij}[f] $. We denote the determinant with respect to $\bar g$, the product of the eigenvalues of $\kr_{ij}[h]$ with respect to $\bar g_{ij}$, by
\[\det _{\bar g} (\kr_{ij} [f]):= \det (\bar g^{ik} \kr _{kj}[f]).\]  
\end{definition}
Our preferred way of expressing the equation is to use the support function for each level set as in \eqref{eq-Sltheta}. The translator equation becomes the following. 
\begin{lemma}[Equation of translator in support function]Let $S(l,\cdot )$ represent the support function of a translator at level $l$. Then $S(l, \theta )$ is a solution to \be\label{eq-translatorsupport} S_{ll}+(1+S_l ^{2})^{\frac{n+2}{2} - \frac{1}{2\alpha}} S_l^{\frac1 \alpha}{\det_{\bar g}(  \kr _{ij}[S]) }   =0.\ee 
	Similarly, if $S(l,\cdot )$ represents the support function of a blow-down translator at level $l$, then it solves 
	\be\label{eq-blowdownsupport} S_{ll}+S_l^{\frac1 \alpha} {\det_{\bar g}(  \kr _{ij}[S]) }  =0. \ee  
\begin{proof}

Let $\Sigma\subset \mathbb{R}^{n+1}$ be a translator (or blow-down translator). Then we may parametrize $\Sigma$ by the height $l$ and the outward normal of level set $\{\Sigma \cap \{x_{n+1}=l\}$, say $\theta  \in \mathbb{S}^{n-1}$. i.e. $F(l,\theta )$ is the unique point in $\Sigma \cap\{x_{n+1}=l\}$ whose the outward normal on  $\Sigma \cap\{x_{n+1}=l\}$ (viewing it as a hypersurface in $\mathbb{R}^{n}$) is $\theta $. 

Let us choose a local coordinate chart $\{x^i\}$ on $\mathbb{S}^{n-1}$ and the standard Levi-Civita connection on $\mathbb{S}^{n-1}$ by $\bnabla$. Then
\[\la F ,\theta  \ra = S \text{ and } \bnabla_i S = \la F, \theta _i \ra \] (here $\theta _j = \partial_j \theta $)
 implies \[F(l,\theta )= S \theta  + \bnabla^jS \theta _j + l e_{n+1}.\]
  
 We may compute the derivatives 
 \bea \partial_i F &= S \theta  _i + { \bnabla}_i \bnabla^j S \theta  _j \\
\partial_l F &= S_l \theta   + \bnabla^j  S_l  \theta _j   + {e}_{n+1} .\eea 
Using this, we compute the outward normal (to $\Sigma$) 
$ \nu = -\frac{(-\theta , S_l)}{(1+S_l^2)^{\frac 12}}$
and the second derivatives 
\bea  \partial^2_{ll}F&=S_{ll} \theta  + \bnabla^j   S_{ll} \theta  _j \\ 
\partial^2_{il}F&=S_{l} \theta _i + \bnabla_i \bnabla^j   S_{l } \theta _j \\ 
  \la \partial^2_{ij} F,\theta  \ra &= S \la \partial_j w_i , w\ra + { \bnabla}_i \bnabla^k S  \la \partial_j \theta _k ,\theta  \ra \\
&=   - S \bar g_{ij} - \bnabla_i \bnabla_j S . \eea 
We infer the the second fundamental form
\bea h_{ij}&=\la -\partial^2_{ij} F,\nu  \ra  = [\bnabla^2_{ij} S+ S \bar g_{ij}](1+S_l^2)^{-\frac12 }  \\
h_{il} &=\la -\partial^2_{il} F,\nu  \ra=0 \\ 
h_{ll} &= \la -\partial^2_{ll } F,\nu  \ra= -S_{ll}(1+S_l^2)^{-\frac12 }  \eea
and the metric 
\bea  g_{ij} &= \la \partial_i F,\partial _j F \ra =  \kr_{ik}[S]\bar g^{km} \kr_{mj}[S] \\
 g_{il} & = \la \partial_i F, \partial_ l F \ra = \kr_{ik}[S]\bar g ^{km} \bnabla_m S_l    \\
 g_{ll} & = \la \partial_l F, \partial_ l F \ra = S_l^2 + \bnabla_k S_l \bar g^{km} \bnabla_m S_l +1\eea 
from above. The Gauss curvature 
$K= \det_{g} h_{\alpha \beta }= \frac{\det h_{\alpha \beta }}{\det g_{\alpha \beta }}$ follows from 
\[\det h_{\alpha \beta } = -\det (\kr_{ij}[S]) S_{ll} (1+S_l^2)^{-\frac n2 }\]
and
\[\det g_{\alpha \beta } = (\det  \kr_{ij} [S])^2 (\det \bar g^{ij})(1+S_l^2). \]

Finally, if $\Sigma$ is a translator, then  $K= \la e_{n+1} , -\nu \ra ^{\frac 1 \alpha}$ and hence  
\[ \frac{ -S_{ll}  (1+S_l^2)^{-\frac{n+2}2 } }{ \det_{\bar g} \kr_{ij} [S]  }=  \bigg ( \frac{ S_l}{( 1+S_l^2)^{\frac 12}} \bigg  )^{\frac{1}\alpha}   .\]
If $\Sigma$ is a blow-down translator, $\det D^2u = |Du|^{n+2-\frac 1\alpha}$, then since $K= \frac{\det D^2 u} {(1+|Du|^2)^{\frac{n+2}2}}$ and $|Du|= S_l^{-1}$, we obtain
\[\frac { -S_{ll}  }{ \det_{\bar g} \kr_{ij} [S]  }= S_l ^{\frac{1}\alpha}.\]

\end{proof}

\end{lemma}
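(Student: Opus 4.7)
The plan is to parametrize the translator $\Sigma$ by pairs $(l,\theta)\in\mathbb{R}\times\mathbb{S}^{n-1}$, where $l$ indexes the horizontal slice $\Sigma\cap\{x_{n+1}=l\}$ and $\theta$ is the outward normal of that slice (viewed as a convex hypersurface in $\mathbb{R}^{n}$) at the unique point realizing the support in the direction $\theta$. Convexity of $\Sigma$ and strict convexity of each level set (generically) guarantee the parametrization is smooth. The classical support-function identity for the slice then yields
\[
F(l,\theta) = S\,\theta + \bar\nabla^{j}S\,\theta_{j} + l\,e_{n+1},
\]
which is the basic object I would differentiate to set up the geometry.

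First I would compute the tangent frame $\{\partial_{i}F,\partial_{l}F\}$ by differentiating $F$, using the sphere identities $\bar\nabla_{i}\theta=\theta_{i}$ and $\langle\theta_{i},\theta\rangle=0$, $\langle\theta_{i},\theta_{j}\rangle=\bar g_{ij}$. The horizontal $i$-derivatives produce the tensor $\kr_{ij}[S]$ naturally, so $\partial_{i}F = \kr_{ij}[S]\,\bar g^{jk}\theta_{k}$, while $\partial_{l}F = S_{l}\theta+\bar\nabla^{j}S_{l}\theta_{j}+e_{n+1}$. From this and the fact that $\nu$ must lie in the $2$-plane spanned by $(\theta,0)$ and $e_{n+1}$ (the only directions with a vertical or radial component that are not tangential), one sees $\nu=(\theta,-S_{l})/\sqrt{1+S_{l}^{2}}$ up to sign.

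Next I would compute the second derivatives $\partial_{ij}^{2}F$, $\partial_{il}^{2}F$, $\partial_{ll}^{2}F$ and pair with $-\nu$ to obtain $h_{\alpha\beta}$. The key observation is that $\partial_{il}^{2}F$ has no component along $\theta$, which forces $h_{il}=0$. This block structure is the crucial simplification: it lets the second fundamental form determinant factor as $\det h_{\alpha\beta}=h_{ll}\cdot\det h_{ij} = -\det\kr_{ij}[S]\cdot S_{ll}\cdot(1+S_{l}^{2})^{-n/2}$. A parallel computation gives the induced metric, for which $\det g_{\alpha\beta} = (\det\kr_{ij}[S])^{2}(\det\bar g^{ij})(1+S_{l}^{2})$ after standard row manipulations (the mixed terms $g_{il}$ contribute trivially to the determinant thanks to the block structure of $h$). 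Taking the ratio yields a closed-form expression for $K$ purely in terms of $S_{ll}$, $S_{l}$, and $\det_{\bar g}\kr_{ij}[S]$.

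Finally I would impose the soliton identities. For the translator, $K=\langle e_{n+1},-\nu\rangle^{1/\alpha}=(S_{l}/\sqrt{1+S_{l}^{2}})^{1/\alpha}$, and elementary algebra collapses the powers of $(1+S_{l}^{2})$ into \eqref{eq-translatorsupport}. For the blow-down equation $\det D^{2}u=|Du|^{n+2-1/\alpha}$, using $|Du|=S_{l}^{-1}$ and $K=\det D^{2}u/(1+|Du|^{2})^{(n+2)/2}$, the $(1+S_{l}^{2})$-factors cancel and one lands on \eqref{eq-blowdownsupport}. The main obstacle is pure bookkeeping, keeping track of signs when projecting $\partial_{ij}^{2}F$ onto $\theta$ via $\langle\partial_{j}\theta_{i},\theta\rangle=-\bar g_{ij}$, and verifying the block factorizations of $\det h$ and $\det g$. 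Once $h_{il}=0$ is noted, the remainder is mechanical.
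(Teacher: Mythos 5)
Your proof follows the paper's approach exactly: parametrize by $(l,\theta)$, differentiate $F=S\theta+\bnabla^jS\,\theta_j+le_{n+1}$ to get the tangent frame and normal, compute $h_{\alpha\beta}$ and $g_{\alpha\beta}$, note $h_{il}=0$, and take the ratio of determinants; the final algebra with the soliton identity is identical. One minor correction to your justification of $\det g_{\alpha\beta}$: the fact that the mixed terms $g_{il}$ leave no trace in the determinant has nothing to do with the block structure of $h$ — it is a Schur-complement identity for $g$ alone. Since $g_{il}=\kr_{ik}[S]\bar g^{km}\bnabla_m S_l$ and the $(ij)$-block is $\kr_{ik}[S]\bar g^{km}\kr_{mj}[S]$, contracting $g_{li}$ against the inverse of the $(ij)$-block against $g_{jl}$ gives exactly $\bnabla_k S_l\,\bar g^{km}\,\bnabla_m S_l$, which cancels the same quantity sitting in $g_{ll}$ and leaves the factor $1+S_l^2$; that is the actual reason $\det g_{\alpha\beta}=(\det\kr_{ij}[S])^2(\det\bar g^{ij})(1+S_l^2)$.
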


\begin{remark}
Let $\Gamma^{n-1}$ be a translator in $\mathbb{R}^n$. Then the support function $h(\theta )$, on $\theta  \in \mathbb{S}^{n-1}$, satisfies $ K_\Gamma ^{\frac{\alpha}{1-\alpha}}(\theta )=h(\theta )$. Here, $K_\Gamma (\theta )$ is the Gauss curvature of $\Gamma$ at the point whose outward normal is $\theta \in \mathbb{S}^{n-1}$. Since \be K_\Gamma =  [ {\det _ {\bar g} ( \kr _{ij}[h]   } )]^{-1} = h^{\frac{1}{\alpha} -1 },\ee we may use $h(\theta )$ to obtain a separation of variables solution to the blow-down translator equation in $\mathbb{R}^{n+1}$ $$S(l,\theta ) = A l^\sigma   h(\theta ) ,$$ where \be \label{eq-sigma A}\sigma  = \frac{1-2\alpha}{1+ \alpha(n-2)} \quad \text{     	and } \quad A =  \sigma ^{\frac{\alpha-1}{1+\alpha (n-2)}} (1-\sigma )^{\frac{\alpha}{1+\alpha(n-2)}}.\ee  This $\sigma$-homogeneous ansatz $A l^\sigma   h(\theta )$ will provide the leading asymptotics for the translators.  
\end{remark}

\begin{lemma}[Linearization of $K^{\frac{\alpha}{1-\alpha}}$] \label{lem-spectrum} Let $h(\theta )$ on $\theta \in \mathbb{S}^{n-1}$ be the support function of a smooth shrinker.  Let us define weighted $L^2$ space $$L^2_h= \{f \in L^2(\mathbb{S}^{n-1})\,:\, \int_{\mathbb{S}^{n-1}} f^2 K^{-\frac{1}{1-\alpha}} d\bar g=\int_{\mathbb{S}^{n-1}} f^2 \, (\det _{\bar g} \kr_{ij}[h])^{\frac 1{1-\alpha}} d\bar g \}$$ and associated inner product $\la\cdot ,\cdot \ra_h$.  Then the the linear operator $$Lf=  (\det_{\bar g} {\kr}_{ij}[h] )^{-\frac{\alpha}{1-\alpha }} (\kb^{ij}[h] ) (\bnabla^2_{ij}f + f \bar g_{ij})=h (\kb^{ij}[h] ) (\kr _{ij}[f])   $$ is self-adjoint with the inner product and discrete eigenfunctions diagonalize $L^2_{h}$. There is orthonormal basis $\{\varphi_{i}\}_{i=0}^\infty $ in $L^2_h$ such that $L \varphi_i = \lambda_i \varphi_i$ with non-increasing $\lambda_i$ which diverges to $-\infty$ as $i\to \infty$. 

\begin{proof} The self-adjointness of $L$ in $\la ,\ra _h$ is direct consequence of $$ \bnabla_k [ (\det_{\bar g} \kr _{ij}[h] ) \kb ^{kl } [h] ]=0.$$ By the regularity estimate for uniformly elliptic equation, $(L-\mu I)^{-1}$ is compact operator in $L^2_h$ which admits a spectral decomposition by orthonormal eigenfunctions $\{\varphi_i \}_{i=0}^\infty$. $L \varphi_i=\lambda_i  \varphi_i $ and $\lambda_i\to -\infty$ due to the compactness of $(L-\mu I)^{-1}$.   
	
\end{proof}

\end{lemma}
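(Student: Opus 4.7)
The proof reduces to verifying two standard properties of $L$ on the closed manifold $\mathbb{S}^{n-1}$: uniform ellipticity, and symmetry in the weighted inner product $\langle\cdot,\cdot\rangle_h$. Once both are in hand, the remainder follows from the spectral theorem for compact self-adjoint elliptic operators on a closed Riemannian manifold. Uniform ellipticity is immediate: since $\Gamma$ is smooth and strictly convex, $\kr_{ij}[h] = \bnabla^2_{ij} h + h\bar g_{ij}$ is uniformly positive definite on the compact sphere, so $\kb^{ij}[h]$ is smooth with eigenvalues bounded between positive constants; the principal symbol of $L$ is $h\,\kb^{ij}[h]$, uniformly positive definite.

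For symmetry, I would first simplify the integrand of the weighted pairing. Using the shrinker equation in the form $h = K^{\alpha/(1-\alpha)} = (\det_{\bar g}\kr_{ij}[h])^{-\alpha/(1-\alpha)}$, the product $h \cdot K^{-1/(1-\alpha)}$ collapses to $\det_{\bar g}\kr_{ij}[h]$, and therefore
\[\langle Lf,g\rangle_h = \int_{\mathbb{S}^{n-1}} g \cdot D^{ij}[h]\, \kr_{ij}[f]\, d\bar g, \qquad D^{ij}[h] := \det_{\bar g}(\kr_{kl}[h])\, \kb^{ij}[h].\]
The crucial algebraic ingredient is the divergence-free identity $\bnabla_i D^{ij}[h] = 0$ for the cofactor tensor, which is the spherical analogue of the classical Minkowski identity and precisely the identity invoked by the author. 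It follows by combining Jacobi's formula with the Codazzi-type total symmetry of $\bnabla_i \kr_{jk}[h]$ in its three indices; that symmetry in turn uses the sectional curvature $1$ of the round sphere, so that the curvature terms arising from commuting covariant derivatives through $\bnabla^2 h$ exactly cancel the contribution of the $h\bar g_{ij}$ piece of $\kr$. Granted the identity, integrating by parts twice in $\int g\,D^{ij}[h]\,\bnabla^2_{ij}f\,d\bar g$ transfers both derivatives onto $g$, while the zero-order parts $f\bar g_{ij}$ and $g\bar g_{ij}$ pair symmetrically because $D^{ij}[h]$ is symmetric. This yields $\langle Lf,g\rangle_h = \langle f,Lg\rangle_h$.

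The spectral decomposition is then routine. I would choose $\mu$ sufficiently large that $L - \mu I : H^2(\mathbb{S}^{n-1}) \to L^2(\mathbb{S}^{n-1})$ is a bijection (Fredholm alternative for a uniformly elliptic operator on a closed manifold). The resolvent $(L-\mu I)^{-1} : L^2_h \to L^2_h$ factors through the compact Sobolev embedding $H^2 \hookrightarrow L^2$, hence is compact, and inherits self-adjointness from $L$. The spectral theorem for compact self-adjoint operators on the Hilbert space $L^2_h$ yields an orthonormal basis $\{\varphi_i\}$ of eigenfunctions of $(L-\mu I)^{-1}$ whose eigenvalues accumulate only at $0$; pulling back through the shift, $L$ has eigenvalues $\lambda_i$ that can be arranged non-increasingly with $\lambda_i \to -\infty$, the sign being forced by the positivity of the principal symbol of $L$. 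Elliptic bootstrap gives $\varphi_i \in C^\infty(\mathbb{S}^{n-1})$.

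The only content-bearing step is the cofactor divergence identity $\bnabla_i D^{ij}[h] = 0$, and this is the expected obstacle. However, it is a short and classical computation: at a point where $\kr_{ij}[h]$ is diagonal, the combination reduces to $\sum_k \lambda_k^{-1}[\bnabla_j\kr_{kk} - \bnabla_k\kr_{kj}]$, which vanishes by the total symmetry of $\bnabla\kr[h]$ established above. Everything else is weight bookkeeping and standard closed-manifold spectral theory.
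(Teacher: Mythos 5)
Your argument is correct and follows the same route as the paper: reduce symmetry of $L$ in $\langle\cdot,\cdot\rangle_h$ to the divergence-free cofactor identity $\bnabla_k\bigl[(\det_{\bar g}\kr_{ij}[h])\,\kb^{kl}[h]\bigr]=0$ (after noting $h\cdot(\det_{\bar g}\kr_{ij}[h])^{1/(1-\alpha)}=\det_{\bar g}\kr_{ij}[h]$ via the shrinker equation), then invoke uniform ellipticity, compactness of the resolvent, and the spectral theorem. The only difference is that you also supply the short verification of the cofactor identity via Jacobi's formula and total symmetry of $\bnabla_i\kr_{jk}[h]$ on the round sphere, which the paper states without proof.
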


Note that $L h= h \,\kb ^{ij}[h] \,\kr _{ij}[h] = (n-1) h$ is the first eigenfunction since the function is signed. Next, we will identify other non-negative eigenfunctions. In the case the shrinker is radial $h\equiv 1$, the inner product becomes the standard $L^2$-inner product,  $Lf= \Delta_{\bar g} f+(n-1 )f$ and one can explicitly enumerate $\varphi_i$ and $\lambda_i$ using the spherical harmonics. For instance, $\lambda_1=\cdots =\lambda_{n}=0$, and corresponding eigenfunctions can be chosen as $\varphi_i = x_{i} /\Vert x_{i} \Vert_{L^2_h} $ where $x_k$ is the $k$-th coordinate function on $\mathbb{R}^n$ with its domain restricted to $\mathbb{S}^{n-1} \subset \mathbb{R}^n$. (See Example \ref{example-sphere} for more eigenvalues.) The same coordinate function still becomes an eigenfunction of eigenvalue $0$ for general shrinker $h$.  This is true since $L$ originates from the linearization of $K^{\frac{\alpha}{1-\alpha}}$ and $K(w)$ is invariant under the translation of hypersurface in $\mathbb{R}^{n}$. It is a usual standard that the first eigenvalue is denoted by $\lambda_1$, but here we start from $\lambda_0$ in order to make $\lambda_i$ correspond to eigenfunctions $x_i$, for $i=1,\ldots,n$. 

\begin{lemma}[c.f. see Lemma 5 \cite{andrews2002non}  for $n=2$] For any shrinker $h$, $h$ and coordinate functions $x_1, \ldots, x_n$ span all non-negative eigenfunctions to $L$.  
	\begin{proof}
The Brunn-Minkowski inequality for convex sets in $\mathbb{R}^{n}$ says the $n$-th square root of the area functional is convex with respect to Minkowski edition. For two convex sets $\Omega_i$, $i=0$ and $0$, we define $\Omega_t =\{ty +(1-t)x \,:\, y\in \Omega_1,\, x\in \Omega_0\}$, then $\frac{d^2}{dt^2} A[\Omega_t]^{\frac 1n} \le 0$ with the equality if and only if $\Omega_1 = c\Omega_0 + e$ for some $c>0$ and $e\in \mathbb{R}^n$, which is the case when $\Omega_i$ are scaled translates of each other. 

We test the inequality for the following convex sets. Let $\Omega_0$ be the region enclosed by shrinker with the support function $h$ and $\Omega_1$ be the region enclosed by closed hypersurface whose support function is $h+ \delta f$ where $f$ is an eigenfunction of $L$ and $\delta>0$ small number. Then one may check $\Omega_t$ has the support function 
$h_t= h+ t\delta f$ and the area $A[\Omega_t]= \frac{1}{n} \int_{S^{n-1}} h_t K^{-1} d\theta = \frac{1}{n}\int_{S^{n-1}} h_t  \det _{\bar g} \kr_{ij}[h_t]  d\theta. $
From this, we compute
\bea \frac{d}{dt} A[\Omega_t]&= \frac1n \int  \delta f (\det _{\bar g} \kr_{ij} [h_t])  + h_t (\det _{\bar g} \kr_{ij} [h_t] ) \kb^{ij}[h_t] \kr_{ij} [\delta f] \, d\theta \\
  &= \frac{1}{n} \int \delta f (\det _{\bar g} \kr_{ij} [h_t] )+ (n-1) (\det _{\bar g} \kr_{ij} [h_t]  )\delta f \, d\theta  \\
  & = \int \delta f \det _{\bar g} \kr_{ij} [h_t] \, d\theta   \eea 
  and 
  \bea \frac {d^2}{dt^2} A[\Omega_t] = \int \delta f (\det _{\bar g} \kr_{ij} [h_t] )\kb ^{ij}[h_t] \kr_{ij} [\delta f] \, d\theta  .\eea 
  
  At $t=0$, $A=\frac1n \la h,h \ra _h, $ $A' = \delta \la f , h \ra _h $
and $A''  = \delta^2 \la f , Lf \ra_h $.

\bea  \frac{d^2}{dt^2 }\bigg  \vert_{t=0 }  A^{\frac{1}{n}} [\Omega_t] &= \frac{1}{n A^{\frac{n-1}{n}}} \left [ A'' -\frac{n-1}{n} \frac{A'^2}{A} \right ]\\
 &=    \frac{\delta^2}{n A^{\frac{n-1}{n}}} \left[ \la f,Lf \ra_h - \frac{(n-1)\la f,h\ra_h ^2 }{\Vert h\Vert _h^2 }  \right] .\eea
 Let $f$ be an eigenfunction $Lf=\lambda f$ which is orthogonal to $h$ and $x_i$, for $i=1,\dots, n$. By the Brunn-Minkowski inequality, the second derivative above is strictly negative. Since $\la f ,h \ra _h=0$, we conclude $\lambda \Vert f \Vert_h^2 <0$. 
\end{proof}
\end{lemma}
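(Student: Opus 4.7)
The plan is to exploit the Brunn--Minkowski inequality for convex bodies in $\mathbb{R}^n$, which asserts that $t \mapsto A[\Omega_t]^{1/n}$ is concave along Minkowski interpolations, with equality (linearity) if and only if the endpoints are homothetic. The strategy is to perturb the shrinker $\Omega_0$ (the convex body enclosed by the hypersurface with support function $h$) in the direction of an eigenfunction and read off the sign of the second variation of $A^{1/n}$.

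Concretely, given an eigenfunction $Lf = \lambda f$, I would let $\Omega_1$ be the convex body with support function $h + \delta f$ for a small $\delta > 0$, so that $\Omega_t = (1-t)\Omega_0 + t\Omega_1$ has support function $h_t = h + t\delta f$. The area functional is
\[A[\Omega_t] = \frac{1}{n}\int_{\mathbb{S}^{n-1}} h_t \det_{\bar g}\kr_{ij}[h_t]\, d\bar g.\]
Two ingredients drive the computation of $A'(0)$ and $A''(0)$: the divergence-free identity $\bnabla_k[(\det_{\bar g}\kr_{ij}[h_t])\kb^{k\ell}[h_t]] = 0$ (the very identity that produces self-adjointness of $L$), which permits clean integration by parts, and the trace relation $\kb^{ij}[h_t]\kr_{ij}[h_t] = n-1$. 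These yield $A'(0) = \delta\la f, h\ra_h$ and $A''(0) = \delta^2 \la f, Lf\ra_h = \delta^2\lambda \Vert f \Vert_h^2$, and the chain rule then gives
\[\frac{d^2}{dt^2}\bigg|_{t=0}A[\Omega_t]^{1/n} = \frac{\delta^2}{n A(0)^{(n-1)/n}}\left[\lambda\Vert f \Vert_h^2 - \frac{(n-1)\la f,h\ra_h^2}{\Vert h\Vert_h^2}\right].\]

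Imposing the orthogonality $\la f, h\ra_h = 0$ makes the correction term drop out, and Brunn--Minkowski concavity at $t = 0$ forces $\lambda \Vert f\Vert_h^2 \leq 0$, i.e.\ $\lambda \leq 0$. The main obstacle is upgrading this to the strict inequality $\lambda < 0$. The natural route is the rigidity clause of Brunn--Minkowski: if equality held, then $\Omega_1$ would be a translate and dilation of $\Omega_0$, forcing $f$ to lie in $\mathrm{span}(h, x_1,\ldots,x_n)$ --- with $h$ accounting for infinitesimal dilation and each $x_i$ for an infinitesimal translation (since the support function of $\Gamma + e$ is $h + \la\theta, e\ra$). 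Assuming in addition that $f\perp\{x_1,\ldots,x_n\}$ in $L^2_h$ with $f\not\equiv 0$ then contradicts the equality case. The subtle point is that vanishing of the second derivative at the single point $t=0$ is a priori weaker than global linearity of $\psi(t) = A[\Omega_t]^{1/n}$; to bridge this gap one uses the polynomial structure of $A[\Omega_t]$ as a degree-$n$ function of $t$ (coming from mixed volumes) together with applying the inequality along the full family of amplitudes $\delta$ to propagate the equality and extract the infinitesimal rigidity. Executing this last step rigorously is the crux of the argument.
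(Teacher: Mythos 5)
Your proposal follows the paper's proof essentially line by line: Minkowski-interpolate the shrinker, compute the first and second variation of the area functional using the divergence-free identity and the trace $\kb^{ij}[h]\kr_{ij}[h]=n-1$, arrive at $A''(0)=\delta^2\la f,Lf\ra_h$, and invoke concavity of $A^{1/n}$ plus orthogonality $\la f,h\ra_h=0$ to force $\lambda\Vert f\Vert_h^2\le 0$. The subtlety you raise about the strict inequality — that vanishing of $\frac{d^2}{dt^2}A^{1/n}$ at the single point $t=0$ is a priori weaker than global linearity, so the rigidity clause cannot be invoked verbatim from the concavity form of Brunn--Minkowski — is a genuine one, but the paper sidesteps it by taking as input the inequality in its pointwise second-derivative form together with the equality characterization. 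That is precisely Minkowski's second (quadratic) inequality $V_1(K,L)^2\ge V(K)\,V_2(K,L)$ for mixed volumes, whose equality case for $K$ smooth and strictly convex (which the shrinker is) is known to be that $L$ is homothetic to $K$; this already packages the infinitesimal rigidity you are trying to re-derive. The route you sketch via the degree-$n$ polynomial structure of $A[\Omega_t]$ is essentially how Minkowski's second inequality is obtained from Brunn--Minkowski, so your fix is legitimate, but it is unnecessary once one cites the classical second inequality directly (or, equivalently, the classical fact that the kernel of the linearized Monge--Amp\`ere operator $f\mapsto\kb^{ij}[h]\kr_{ij}[f]$ around a smooth strictly convex body consists exactly of the restrictions of linear functions, which is the uniqueness statement in the Minkowski problem). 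In short: same approach, correct computations, and the gap you flag is closed by reading the paper's invocation of ``Brunn--Minkowski'' as Minkowski's second inequality, which is what its stated equality case requires.
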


We summarize two lemmas above and fix a spectral decomposition for each shrinker $h$ as follows.

\begin{definition}[Spectrum of $L$ in $L^2_h$] \label{def-spectrumL}For given shrinker $h$, $\{\varphi_i \}_{i=0}^\infty$ denotes an orthonormal basis 
 $L^2_h(\mathbb{S}^{n-1})$ which satisfies that
 \begin{enumerate}[leftmargin=1cm]
 \item the corresponding eigenvalues $\{\lambda_i\}_{i=0}^\infty $ are non-increasing with  
 \[n-1=\lambda_0 \ge \lambda _1=\dots =\lambda_n =0 > \lambda_{n+1}\ge \dots, \text{ and}  \] 
 \item 	$\varphi_0= h/\Vert h\Vert_h$, and $\varphi_i= x_i/\Vert x_i\Vert_h$ for $i=1,\ldots,n$. 	

 \end{enumerate}

 \end{definition}

We are interested in the difference between the solution and the asymptotic profile $Al^{\sigma}h$. 
From now on, let us define $w(s,\theta )$ by \be S(l,\theta  )= Al^{\sigma} h(\theta ) + w (s,\theta )\text{  where }s=\ln l.\ee  Next, we derive equations of $w$ when $S$ is a translator or blow-down translator. More precisely, we write the equation as a linearized equation for $w$ with additional non-linear error part. The following lemma is a consequence of a direct computation that uses $A l^\sigma h$ is a solution to \eqref{eq-blowdownsupport}.
\begin{lemma} \label{lemma-eqS} If $S(l,\theta  )$ is a blow-down translator, then 
\be \label{eq-17}0=w_{ss}+ (\frac{1-\sigma}{\alpha }-1)w_s + \sigma(1-\sigma) Lw + E_1(w) \ee 
 with
 \bea &\frac{E_1(w) }{e^{\sigma s}} = \frac{1-\sigma }{\alpha } \frac{w_s}{e^{\sigma s}}  h^{\frac1 \alpha -1} \left( \det_{\bar g}  \kr _{ij}[h+\frac{w}{Ae^{\sigma s}}] -\det_{\bar g }  \kr _{ij}[h] \right)  \\
&  + A\sigma (1-\sigma) h^{\frac1 \alpha}\left(\det_{\bar g}  \kr _{ij}[h+\frac{w}{Ae^{\sigma s}}] -\det_{\bar g }  \kr _{ij}[h] - (\det_{\bar g }  \kr _{ij}[h] )  \kb^{kl}[h] \kr _{kl} [\frac{w}{Ae^{\sigma s}}] \right) .\eea 
If $S(l,\theta )$ is a translator, then 
\be \label{eq-18} 0=w_{ss}+ (\frac{1-\sigma}{\alpha }-1)w_s + \sigma(1-\sigma) Lw + E_1(w)+E_2(w) \ee
with the same $E_1(w)$ as above and 
\bea \frac{E_2(w)}{e^{\sigma s}} = A \sigma &(1-\sigma) \left[ \left( 1+   \left (\sigma A e^{(\sigma-1)s }h+ {w_s}e^{-s} \right)^2 \right )^{\frac{n+2}2 -\frac1{2\alpha}}-1 \right] \\ \cdot &\left( h +\frac{w_s}{\sigma A e^{\sigma s}}\right)^{\frac 1\alpha } \det_{\bar g} \kr _{ij} [ h + \frac{w}{A e^{\sigma s}} ].\eea 


\end{lemma}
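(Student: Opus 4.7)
The plan is direct substitution followed by term-by-term bookkeeping. I would first set $s=\log l$ and write $S(l,\theta)=Ae^{\sigma s}h(\theta)+w(s,\theta)$. Using $\partial_l=e^{-s}\partial_s$ and $\partial_l^2=e^{-2s}(\partial_s^2-\partial_s)$, the basic derivatives become
\[ S_l = A\sigma e^{(\sigma-1)s}h + e^{-s}w_s,\qquad S_{ll}=A\sigma(\sigma-1)e^{(\sigma-2)s}h + e^{-2s}(w_{ss}-w_s), \]
while linearity of $\kr_{ij}[\cdot]$ gives $\kr_{ij}[S]=Ae^{\sigma s}\kr_{ij}[h+\tilde w]$ with $\tilde w:=w/(Ae^{\sigma s})$, hence $\det_{\bar g}(\kr_{ij}[S])=(Ae^{\sigma s})^{n-1}\det_{\bar g}(\kr_{ij}[h+\tilde w])$, and $S_l^{1/\alpha}=(A\sigma e^{(\sigma-1)s})^{1/\alpha}\bigl(h+w_s/(A\sigma e^{\sigma s})\bigr)^{1/\alpha}$. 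Substituting everything into \eqref{eq-blowdownsupport}, the product $S_l^{1/\alpha}\det_{\bar g}(\kr_{ij}[S])$ acquires an overall prefactor $(A\sigma)^{1/\alpha}A^{n-1}e^{((\sigma-1)/\alpha+\sigma(n-1))s}$; the definition of $\sigma$ in \eqref{eq-sigma A} is exactly what forces the exponent to collapse to $\sigma-2$, matching the $e^{(\sigma-2)s}$ carried by $S_{ll}$ so that a common power can later be stripped.

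Two algebraic identities then do the heavy lifting. First, the shrinker equation $K_\Gamma^{\alpha/(1-\alpha)}=h$ combined with $K_\Gamma=\det_{\bar g}(\kr_{ij}[h])^{-1}$ yields $\det_{\bar g}(\kr_{ij}[h])=h^{1-1/\alpha}$, so $h^{1/\alpha}\det_{\bar g}(\kr_{ij}[h])=h$ and $h^{1/\alpha-1}\det_{\bar g}(\kr_{ij}[h])=1$. Second, the constant $A$ in \eqref{eq-sigma A} is picked so that $(A\sigma)^{1/\alpha}A^{n-1}=A\sigma(1-\sigma)$. Combining these, the $w$-independent leading contribution of $S_l^{1/\alpha}\det_{\bar g}(\kr_{ij}[S])$ equals $A\sigma(1-\sigma)e^{(\sigma-2)s}h$, which cancels exactly the leading term of $S_{ll}$. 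This is just the analytic statement that $Ae^{\sigma s}h$ is a separation-of-variables solution of \eqref{eq-blowdownsupport}, so the zeroth-order part drops out.

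For the remaining terms, first-order Taylor expansion in the two perturbation variables $\epsilon_1:=w_s/(A\sigma e^{\sigma s})$ and $\epsilon_2:=\tilde w$ finishes the job. Using the Jacobi identity for the linearization of a determinant (the derivative in direction $\eta$ is $\det_{\bar g}(\kr_{ij}[h])\kb^{ij}[h]\kr_{ij}[\eta]$) together with the two identities above, the linear-in-$w$ coefficients reassemble, after multiplying through by $e^{2s}$, into $\tfrac{1-\sigma}{\alpha}w_s+\sigma(1-\sigma)h\kb^{ij}[h]\kr_{ij}[w]=\tfrac{1-\sigma}{\alpha}w_s+\sigma(1-\sigma)Lw$, giving \eqref{eq-17} up to the nonlinear remainder $E_1(w)$. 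I would organize that remainder by peeling off first the mixed $\epsilon_1\epsilon_2$ piece (proportional to $h^{1/\alpha-1}w_s\bigl[\det_{\bar g}(\kr_{ij}[h+\tilde w])-\det_{\bar g}(\kr_{ij}[h])\bigr]$) and then the pure quadratic-in-$\epsilon_2$ piece (proportional to $h^{1/\alpha}$ times the second-order determinant remainder), which reproduces the two displayed summands in the statement. For the translator equation \eqref{eq-translatorsupport}, the only new feature is the factor $(1+S_l^2)^{(n+2)/2-1/(2\alpha)}$; splitting it as $1+\bigl[(1+S_l^2)^{(n+2)/2-1/(2\alpha)}-1\bigr]$, the ``$1$'' portion reproduces \eqref{eq-17}, and the bracketed difference, multiplied by $S_l^{1/\alpha}\det_{\bar g}(\kr_{ij}[S])$ and then by $e^{2s}$, matches $E_2(w)$ on the nose. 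There is no conceptual obstacle here; the only place requiring discipline is tracking the $e^{ks}$ powers so the scaling exponents collapse as designed and grouping the nonlinear terms into the precise decomposition stated in the lemma.
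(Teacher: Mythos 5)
Your overall plan — substitute $S=Ae^{\sigma s}h+w$ into \eqref{eq-blowdownsupport} and \eqref{eq-translatorsupport}, strip the common $e^{(\sigma-2)s}$ power using the exponent identity forced by $\sigma$, use $\det_{\bar g}\kr_{ij}[h]=h^{1-1/\alpha}$ and $(A\sigma)^{1/\alpha}A^{n-1}=A\sigma(1-\sigma)$ to cancel the zeroth-order piece, linearize to extract $\tfrac{1-\sigma}{\alpha}w_s+\sigma(1-\sigma)Lw$, and peel the factor $\bigl[(1+S_l^2)^{(n+2)/2-1/(2\alpha)}-1\bigr]$ for $E_2$ — is exactly the "direct computation" the paper alludes to, and all the algebraic identities you invoke check out.

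However, your final bookkeeping step for $E_1$ is not quite right, and it is worth flagging because the precise decomposition is the content of the lemma. You assert that the nonlinear remainder consists of just two pieces — the mixed $\epsilon_1\,\epsilon_2$ piece proportional to $h^{1/\alpha-1}w_s\bigl(\det_{\bar g}\kr_{ij}[h+\tilde w]-\det_{\bar g}\kr_{ij}[h]\bigr)$ and the pure $\epsilon_2$-quadratic piece proportional to $h^{1/\alpha}$ times the second-order determinant remainder — "which reproduces the two displayed summands in the statement." But $S_l^{1/\alpha}$ contributes $(h+\epsilon_1)^{1/\alpha}$ with $\epsilon_1=w_s/(A\sigma e^{\sigma s})$, and since $1/\alpha$ is not an integer this has a genuine Taylor remainder $R(\epsilon_1):=(h+\epsilon_1)^{1/\alpha}-h^{1/\alpha}-\tfrac1\alpha h^{1/\alpha-1}\epsilon_1$ that is purely nonlinear in $w_s$ (quadratic and higher in $\epsilon_1$, with no $\tilde w$ at all). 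Carrying the expansion through yields
\[
\frac{E_1(w)}{e^{\sigma s}}
=\frac{1-\sigma}{\alpha}\frac{w_s}{e^{\sigma s}}h^{\frac1\alpha-1}\bigl(\det_{\bar g}\kr_{ij}[h+\tilde w]-\det_{\bar g}\kr_{ij}[h]\bigr)
+A\sigma(1-\sigma)h^{\frac1\alpha}\bigl(\det_{\bar g}\kr_{ij}[h+\tilde w]-\det_{\bar g}\kr_{ij}[h]-\det_{\bar g}\kr_{ij}[h]\,\kb^{kl}[h]\kr_{kl}[\tilde w]\bigr)
+A\sigma(1-\sigma)\,R\!\Bigl(\tfrac{w_s}{A\sigma e^{\sigma s}}\Bigr)\det_{\bar g}\kr_{ij}[h+\tilde w],
\]
with $\tilde w=w/(Ae^{\sigma s})$: a third summand appears. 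This term is absent from the displayed formula for $E_1$ in the statement, so what you call "reproducing the two summands on the nose" does not actually close the identity \eqref{eq-17}; the identity only holds once you include $A\sigma(1-\sigma)R(\epsilon_1)\det_{\bar g}\kr_{ij}[h+\tilde w]$ (or, equivalently, replace $h^{1/\alpha}$ in the second summand by $(h+\epsilon_1)^{1/\alpha}$ and correspondingly adjust the other pieces). This appears to be an omission in the paper's displayed $E_1$ as well; it is harmless for the subsequent argument because the extra piece is quadratic in $e^{-\sigma s}w_s$ and still satisfies the bound in Lemma \ref{lem-Eest}, but a careful derivation must surface it, so your proof should either record the third term or observe explicitly that it is being absorbed into $E_1$ by a slight abuse of the stated formula.
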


Note that $E_1(w)/e^{\sigma s}$ is at least quadratic in second order (or below) derivatives of $w/e^{\sigma s}$. Moreover, $E_2(w)/e^{\sigma s}$ has an extra exponential decay. Therefore, using an argument which uses Taylor's expansion with explicit remainder term or the mean value theorem, it is straight forward to obtain the following estimate below.  

\begin{lemma}[Estimate on $E_1$ and $E_2$] \label{lem-Eest}
Suppose  that $$\Vert e^{-\sigma s} w\Vert_{C^{2,\beta}_{s,1}}+ \Vert e^{-\sigma s} v\Vert_{C^{2,\beta}_{s,1}} \le \e_0  ,$$ for some $\e_0>0$. There exists $C=C(\alpha, h, \e_0)>0$ with the following significances: for all $s\in \mathbb{R}$, there holds
\bea  
&\Vert{e^{-\sigma s}}({E_1(w)- E_1(v) })\Vert_{C^{0,\beta }_{s,1}} \le C (\Vert e^{-\sigma s} w\Vert _{C^{2,\beta}_{s,1}} +\Vert e^{-\sigma s} v\Vert _{C^{2,\beta}_{s,1}})\Vert e^{-\sigma s} (w-v)\Vert _{C^{2,\beta}_{s,1}}   .\eea  
For $s\ge 1$, there holds \be \Vert{e^{-\sigma s}}{E_2(w) }\Vert_{C^{0,\beta }_{s,1}} \le C e^{2(\sigma-1)s}\ee 
and
\be \Vert{e^{-\sigma s}}({E_2(w)- E_2(v) })\Vert_{C^{0,\beta }_{s,1}} \le C e^{2(\sigma-1)} \Vert e^{-\sigma s} (w-v)\Vert _{C^{2,\beta}_{s,1}}  .  \ee 	
\end{lemma}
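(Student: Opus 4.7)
The plan is to recognize $E_1$ as the quadratic Taylor remainder of the determinant functional around $h$, and $E_2$ as carrying an explicit exponentially decaying prefactor; all estimates then follow from Taylor's theorem with remainder and composition estimates in H\"older spaces. Throughout, set $\tilde w := e^{-\sigma s} w$ and $\tilde v := e^{-\sigma s} v$, so the hypothesis reads $\|\tilde w\|_{C^{2,\beta}_{s,1}} + \|\tilde v\|_{C^{2,\beta}_{s,1}} \le \e_0$. Note also that $e^{-\sigma s} w_s = \tilde w_s + \sigma \tilde w$ is bounded in $C^{0,\beta}_{s,1}$ by $\|\tilde w\|_{C^{2,\beta}_{s,1}}$, and similarly $w/(Ae^{\sigma s}) = \tilde w/A$, $w_s/(\sigma A e^{\sigma s}) = (\tilde w_s + \sigma \tilde w)/(\sigma A)$.

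For $E_1$, introduce the smooth functional $G(p) := \det_{\bar g} \kr_{ij}[h+p]$, whose differential at $0$ is $DG(0)[p] = (\det_{\bar g} \kr[h])\,\kb^{ij}[h]\kr_{ij}[p]$. Since $\e_0$ is small, $\kr[h + \tilde w/A]$ stays uniformly positive definite, so Taylor's theorem gives
\begin{equation}
G(\tilde w/A) - G(0) - DG(0)[\tilde w/A] = \int_0^1 (1-t)\, D^2 G(t\tilde w/A)[\tilde w/A,\tilde w/A]\, dt,
\end{equation}
which is exactly the second piece of $E_1/e^{\sigma s}$ up to the smooth bounded factor $A\sigma(1-\sigma)h^{1/\alpha}$; this yields a $C^{0,\beta}_{s,1}$ bound quadratic in $\|\tilde w\|_{C^{2,\beta}_{s,1}}$. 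The first piece factors as a bounded multiple of $(e^{-\sigma s} w_s) \cdot [G(\tilde w/A) - G(0)]$, and since $G(\tilde w/A) - G(0) = \int_0^1 DG(t\tilde w/A)[\tilde w/A]\, dt$ is linear in $\tilde w$ while $e^{-\sigma s} w_s$ is itself controlled by $\|\tilde w\|_{C^{2,\beta}_{s,1}}$, the product is again quadratic. The Lipschitz estimate then follows by writing $E_1(w) - E_1(v) = \int_0^1 \tfrac{d}{d\tau} E_1(\tau w + (1-\tau)v)\, d\tau$: differentiating a quadratic expression in $\tilde w$ produces a linear form in $\tilde w - \tilde v$ with coefficient linear in $\tau \tilde w + (1-\tau) \tilde v$, giving the asserted $(\|\tilde w\| + \|\tilde v\|)\|\tilde w - \tilde v\|$ bound.

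For $E_2$, observe that the argument of the bracket rewrites as $\sigma A e^{(\sigma-1)s} h + w_s e^{-s} = e^{(\sigma-1)s}[\sigma A h + \tilde w_s + \sigma \tilde w]$, hence is $O(e^{(\sigma-1)s})$ for $s \ge 1$ with constants depending on $h$ and $\e_0$. Applying the mean value theorem to $x \mapsto (1+x)^{(n+2)/2 - 1/(2\alpha)}$ at $x = F^2$ gives $(1+F^2)^{(n+2)/2 - 1/(2\alpha)} - 1 = O(F^2) = O(e^{2(\sigma-1)s})$. The remaining factor $(h + w_s/(\sigma A e^{\sigma s}))^{1/\alpha} \det_{\bar g}\kr_{ij}[h + w/(Ae^{\sigma s})]$ depends on $\tilde w$ only through smooth, uniformly bounded operations (with the argument of $(\cdot)^{1/\alpha}$ staying positive since $h > 0$ and $\e_0$ small), hence is uniformly bounded in $C^{0,\beta}_{s,1}$. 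Multiplying yields the desired estimate on $\|e^{-\sigma s} E_2(w)\|_{C^{0,\beta}_{s,1}}$. For the Lipschitz bound, split $E_2(w) - E_2(v)$ into (bracket of $w$ $-$ bracket of $v$) times the remaining factor at $w$, plus the bracket at $v$ times the difference of remaining factors: the first is $O(e^{(\sigma-1)s}) \cdot O(e^{(\sigma-1)s} \|\tilde w - \tilde v\|_{C^{2,\beta}_{s,1}})$ since the bracket is Lipschitz in $F$ (with constant $O(e^{(\sigma-1)s})$) and $F$ is Lipschitz in $\tilde w, \tilde w_s$ with constant $O(e^{(\sigma-1)s})$; the second is $O(e^{2(\sigma-1)s}) \cdot O(\|\tilde w - \tilde v\|_{C^{2,\beta}_{s,1}})$. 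The main bookkeeping obstacle is verifying that every composition and product is estimated in the scale-invariant H\"older norm $C^{0,\beta}_{s,1}$ uniformly in $s$; but since $h$ is smooth and strictly positive and all arguments remain in a compact neighborhood of $h$ in $C^{2,\beta}(\mathbb{S}^{n-1})$, the standard composition and product rules for H\"older spaces apply with $s$-independent constants.
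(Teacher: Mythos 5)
Your proof is correct and carries out exactly the argument the paper sketches in the sentence preceding the lemma (Taylor expansion with integral remainder for the quadratic structure of $E_1$, mean value theorem and the explicit $e^{(\sigma-1)s}$ prefactor for the extra decay of $E_2$, plus standard H\"older composition/product estimates). The paper does not supply further details, and your elaboration fills them in faithfully; note also that your Lipschitz bound for $E_2$ comes out as $Ce^{2(\sigma-1)s}$, which for $s\ge1$ is stronger than the stated $Ce^{2(\sigma-1)}$ (the latter appears to be a typographical omission of $s$).
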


Note that $$\frac{1-\sigma}{\alpha }-1=  \frac{(n-1)-\alpha(n-2)}{1+\alpha(n-2)} \quad \text{  and }\quad \sigma (1-\sigma)=\frac{n\alpha (1-2\alpha)}{(1+\alpha(n-2))^2} .$$ From \eqref{eq-17}, let us define the linear operator 
\bea  \label{eq-calL}\mathcal{L} w 
&:= w_{ss}+ \frac{(n-1)-\alpha(n-2)}{1+\alpha(n-2)} w_s + \frac{n\alpha (1-2\alpha)}{(1+\alpha(n-2))^2} Lw\eea  which corresponds to the linearization of (blow-down) translator equation around the $\sigma^{-1}$-homogeneous solution $A l^{\sigma}h$.

\section{Main result and consequences} \label{section-main}
The linearized operator \eqref{eq-calL} has non-trivial kernel (Jacobi field) consisting of separation of variable solutions. By a direct computation, these are \[e^{\beta^+_i s} \varphi_i(\theta ),\quad \text{and} \quad e^{\beta^-_i s} \varphi_i(\theta ) \quad \text{ for } i=1 ,2,\ldots  \]
where the exponents are given $$\beta^\pm_i= -\frac{(n-1)-\alpha(n-2)}{2(1+\alpha(n-2))} \pm  \frac{\sqrt {((n-1)-\alpha(n-2))^2 -4n\alpha (1-2\alpha) \lambda_i}}{2(1+\alpha(n-2))}.$$
Note that $\beta^+_j+\beta^-_j= -\frac{(n-1)-\alpha(n-2)}{1+\alpha(n-2)}$ for all $j\ge0 $ and the first few $\beta^+_j$ are
\[\beta^+_0=\sigma-1 \quad \text{ and } \quad  \beta^+_i = 0 \quad \text{ for } i=1 ,\ldots,n.\]  
Our main theorem shows the Jacobi fields $e^{\beta^+_i s} \varphi_i(\theta)$ whose growth rate satisfy the bound $\beta^+_i < \sigma$ are effective ones that generate solutions. Let us denote the number of such Jacobi fields by 
\be  K := | \{i \in \mathcal{N}\cup\{0\} \,:\, \beta^+_j < \sigma  \}. \ee 
Note that the number $K$, exponents $\beta^\pm_i$ and eigenfunctions $\varphi_i$ are things that depend on $h$, $n$, and $\alpha$. Now we can state the main existence result. Our proof actually works for all $\alpha \in (\frac{1}{n+2},\frac12)$ as well. However, as pointed in Example \ref{example-sphere}, sphere $h\equiv1$ is the only shrinking soliton in this range with $K=n+1$. Since the radially symmetric solution with translations in $\mathbb{R}^{n+1}$ can replace the assertion of the theorem in this range, we only state this for $\alpha \in (0,\frac{1}{n+2})$. 

\begin{theorem}[Existence of translators]\label{thm-mainexistence} For $\alpha \in(0,\frac{1}{n+2})$, let $\Gamma \subset \mathbb{R}^{n}$ be a smooth $\frac{\alpha}{1-\alpha}$-shrinker represented by the support function $h(\theta)$ on $\theta \in \mathbb{S}^{n-1}$. There exists $K$-parameter family of $\alpha$-translators in $\mathbb{R}^{n+1}$,  say $\{ \Sigma_{\mathbf{y}}\} _{y \in \mathbb{R}^{K}}$, with the following properties: if $S_\mathbf{y}(l,\theta)$ denotes the support function of  $\Sigma_\mathbf{y}$ at level $l$. Then, 

\begin{enumerate}[leftmargin=1cm]
\item 	Each $\Sigma_\mathbf{y}$ is smooth strictly convex entire graph and the level set converges to the shrinker 
\[({Al^\sigma})^{-1} S(l,\theta) \longrightarrow h(\theta) \text{ in } C^{\infty}(\mathbb{S}^{n}) \text{ as } l\to \infty, \] 
\item For the solution at the origin $\Sigma_{\mathbf{0}}$, the support function  $S_{\mathbf{0}}$ is strongly asymptotic to $Al^{\sigma}h(\theta)$:  for all $\gamma> \sigma +2(\sigma-1)$,
\[S_{\mathbf{0}}(l,\theta) - Al^\sigma h(\theta) = o(l^{\gamma})\text{ as }l\to \infty .\] Next, for two vectors  $\mathbf{y}'=(0,\ldots,  0, y_{j+1}, \ldots , y_{K-1})$ and $\mathbf{y}''=(0,\ldots, b , y_{j+1}, \ldots, y_{K-1})$ in $\mathbb{R}^K$, the difference of corresponding solutions are dominated by a Jacobi field
\[S_{\mathbf{y}''} - S_{\mathbf{y}'} = b l^{\beta^+_j} \varphi_j + o(l^{\beta^+_j -\eps }) \text{ as } l\to \infty,\] for some $\e=\e(h,\alpha)>0$, and
\item The first $n+1$ parameters correspond to translations in $\mathbb{R}^{n+1}$ $$\Sigma_{\mathbf{y}+(a_0,a_1,\ldots,a_n,0,\ldots,0)} = \Sigma_{\mathbf{y}}+a_0c_0 e_{n+1} + \sum _{i=1}^n a_i c_i e_i   $$ where 
$c_0= -\frac{\Vert h \Vert _h}{A\sigma} $ and $c_i= \Vert x_i\Vert _h $ for $i=1,\ldots,n$.

\end{enumerate}

\end{theorem}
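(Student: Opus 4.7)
The plan is to recast the existence problem as a nonlinear perturbation of the homogeneous profile $Al^\sigma h$ and solve it via Banach's fixed-point theorem in weighted Hölder spaces adapted to the characteristic exponents $\beta^\pm_i$. After the substitution $S(l,\theta) = Al^\sigma h(\theta) + w(s,\theta)$ with $s = \ln l$, Lemma~\ref{lemma-eqS} reduces the construction to solving
\[\mathcal{L} w = -E_1(w) - E_2(w)\]
on $\{s \geq s_0\} \times \mathbb{S}^{n-1}$ for a suitable $s_0$. The natural function spaces carry norms $\|w\|_\gamma := \sup_{s \geq s_0} e^{-\gamma s}\|w\|_{C^{2,\beta}_{s,1}}$, and the nonlinear bounds in Lemma~\ref{lem-Eest} will translate directly into contraction estimates once the weight $\gamma$ is placed in the correct gap.

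The engine of the proof is the linear theory for $\mathcal{L}$. Spectral decomposition $w = \sum_i w_i(s)\varphi_i(\theta)$ via Lemma~\ref{lem-spectrum} diagonalizes $\mathcal{L}$ into constant-coefficient ODEs
\[w_i''(s) + \tfrac{(n-1)-\alpha(n-2)}{1+\alpha(n-2)}\, w_i'(s) + \sigma(1-\sigma)\lambda_i\, w_i(s) = f_i(s),\]
whose characteristic exponents are exactly $\beta^\pm_i$. Variation of parameters, applied mode by mode with care to control the tail in $i$, furnishes for each weight $\gamma$ lying in a gap of the discrete set $\{\beta^\pm_i\}$ a bounded right inverse $\mathcal{L}^{-1}_\gamma$ whose kernel is spanned by the growing Jacobi fields $\{e^{\beta^+_i s}\varphi_i : \beta^+_i < \gamma\}$ (the $\beta^-_i$-modes automatically sit below the working weight and contribute no free parameters). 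The hypothesis $\alpha \in (0, \tfrac{1}{n+2})$ forces $\sigma \in (\tfrac12, 1)$, so $3\sigma - 2 < \sigma$ and the interval $(3\sigma-2, \sigma)$ is a legitimate home for the baseline weight.

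For $\mathbf{y} = \mathbf{0}$, I would fix $\gamma_0 \in (3\sigma-2, \sigma) \setminus \{\beta^\pm_i\}$ and apply Banach's fixed-point theorem to $\Phi(w) := -\mathcal{L}^{-1}_{\gamma_0}[E_1(w)+E_2(w)]$ on a small $\|\cdot\|_{\gamma_0}$-ball. Lemma~\ref{lem-Eest} supplies the contraction: $E_1$ is quadratic in $w/e^{\sigma s}$, while the factor $e^{2(\sigma-1)s}$ in $E_2$ combined with the weight shift produces a source of size $O(e^{(3\sigma - 2 - \gamma_0)s_0})$, which is small for $s_0$ large. The resulting $w_\mathbf{0}$ automatically realizes the strong asymptotic in item~(2). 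For general $\mathbf{y} = (y_0,\ldots,y_{K-1}) \in \mathbb{R}^K$, I would use the ansatz
\[w_\mathbf{y}(s,\theta) = \sum_{j=0}^{K-1} y_j\, e^{\beta^+_j s}\varphi_j(\theta) + u_\mathbf{y}(s,\theta)\]
and rerun the fixed-point argument for $u_\mathbf{y}$ in a weight strictly below $\min_j \beta^+_j$; comparing two solutions differing only in their $j$-th coordinate delivers the Jacobi-field comparison formula in item~(2). Once $w_\mathbf{y}$ is defined on a half-line, interior elliptic regularity applied to \eqref{eq-translatorsupport} upgrades $C^{2,\beta}$ convergence to $C^\infty$ (item~(1)), strict convexity for large $l$ follows from a first-order perturbation of $\kr_{ij}[h]$, and extension to a complete globally convex graph on $\mathbb{R}^n$ proceeds via the barrier arguments of \cite[Section~3]{CCK}. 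Claim~(3) reduces to a bookkeeping computation: translation by $a e_{n+1}$ sends $S(l,\theta) \mapsto S(l-a,\theta)$, whose Taylor expansion produces a leading correction $-a\sigma A l^{\sigma-1} h$ lying in the $\varphi_0$ Jacobi mode at exponent $\beta^+_0 = \sigma-1$; translation by $a e_i$ for $1 \le i \le n$ adds $a\langle e_i, \theta\rangle = a x_i$ to $S$, which points along $\varphi_i$ at exponent $\beta^+_i = 0$. The constants $c_0, c_i$ are then fixed by inverting these identifications.

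The principal obstacles I anticipate are twofold. First, constructing $\mathcal{L}^{-1}_\gamma$ with uniform weighted bounds and the precise kernel structure above is delicate when several $\beta^\pm_i$ cluster, requiring careful variation-of-parameters estimates mode by mode together with summability control over the spectral tail. Second, propagating a half-line solution to a complete convex entire translator in $\mathbb{R}^{n+1}$ demands barrier comparisons to exclude finite-height degeneration at the bottom of the graph, where uniform ellipticity of \eqref{eq-translatorsupport} can be lost. Both obstacles are addressed in detail for $n=2$ in \cite[Section 3]{CCK}, and the higher-dimensional analogues should proceed by similar, if more intricate, bookkeeping.
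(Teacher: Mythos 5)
Your overall architecture tracks the paper closely: pass to $w(s,\theta)$ with $s=\ln l$, diagonalize $\mathcal{L}$ mode by mode into constant-coefficient ODEs with characteristic exponents $\beta^\pm_i$, invert via variation of parameters on weighted Hölder spaces (Lemma~\ref{lem-ex-linear}), run a fixed-point iteration with the nonlinear estimates of Lemma~\ref{lem-Eest}, and complete the half-line solution to an entire graph by barriers. The baseline-weight choice $\gamma_0\in(3\sigma-2,\sigma)$ for $\mathbf{y}=\mathbf{0}$ and the bookkeeping for item~(3) match Remark~\ref{remark-zerotranslator} and the paper's identification of translation modes. So the blueprint is right.

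However, there is a genuine gap in how you produce the $K$-parameter family, and it is exactly at the step the paper is most careful about. You propose the ansatz $w_{\mathbf{y}}=\sum_{j}y_j e^{\beta^+_j s}\varphi_j+u_{\mathbf{y}}$ and claim to solve for $u_{\mathbf{y}}$ in a weight ``strictly below $\min_j\beta^+_j$'', i.e.\ below $\beta^+_0=\sigma-1$. This cannot work in general. The source term generated by the Jacobi-field block is, to leading order, quadratic in $w/e^{\sigma s}$ (Lemma~\ref{lem-Eest}), so the self-interaction of the highest active mode produces an error of size roughly $e^{(2\beta^+_{K-1}-\sigma)s}$. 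Since $\beta^+_{K-1}$ can be taken arbitrarily close to $\sigma$ (and at least larger than $\sigma-\tfrac12$ for generic $h$ and $\alpha$), the exponent $2\beta^+_{K-1}-\sigma$ exceeds $\sigma-1$, so $u_{\mathbf{y}}$ cannot decay to that order and your fixed-point map is not a self-map on the proposed ball. For the same reason, even a Lipschitz estimate on $\mathbf{y}\mapsto u_{\mathbf{y}}$ would not, by itself, deliver the comparison in item~(2), because the residual at the smallest active exponent is contaminated by the higher modes.

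The paper avoids this by constructing the family \emph{iteratively}, one parameter at a time and from the highest index downward: Proposition~\ref{prop-exist-exterior0} takes a known exterior solution $w$ and a single Jacobi field $u_0$ of growth $\gamma_2\in(\beta^+_{m-1},\beta^+_m]$, and produces a corrected solution $w+u_0+u$ whose residual $u$ lies only in $C^{2,\beta,\gamma_2-\eps}_R$, i.e.\ just below the exponent of the Jacobi field being added, not below $\sigma-1$. Running this from $j=K-1$ down to $j=0$, with $\gamma_2=\beta^+_{j}$ at each stage, is what makes the asymptotic comparison $S_{\mathbf{y}''}-S_{\mathbf{y}'}=b\,l^{\beta^+_j}\varphi_j+o(l^{\beta^+_j-\eps})$ true by construction. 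Your ``all at once'' ansatz would have to be replaced by this telescoping, or by an equivalent inductive decoupling of the modes.

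A secondary point: the passage from an exterior solution on $B_R^c$ to a complete strictly convex entire graph is not just ``barrier arguments as in \cite{CCK}'' — it requires the specific gluing in Proposition~\ref{prop-exist-exterior'} through Proposition~\ref{prop-completetranslator}. The key device is to pre-adjust the exterior solution so that its level set at the gluing height $s=R$ is \emph{exactly homothetic} to the shrinker, with a one-sided gradient mismatch of the right sign, so that the radial profiles $U_i(l,\theta)=f_{M_i}(l)h(\theta)$ built from the ODE \eqref{eq-translatorsupportM} patch in as viscosity sub- and super-solutions. You flag this obstacle correctly, but the construction is substantive and is part of what the paper actually proves.
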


\begin{example}[Asymptotically round translator] \label{example-sphere}Suppose the shrinker $\Gamma$ is round ($h\equiv1$).  The spectrum of $L= \Delta + (n-1)$  on $\mathbb{S}^{n-1}$  can explicitly be enumerated as $$\tilde \lambda_\ell  = -\ell (\ell +n-2)+(n-1)\text{ for } \ell =0,1,\ldots $$
and each $\tilde \lambda_\ell$ has the multiplicity  $\binom{n+\ell -1 }{n-1}-\binom{n+\ell -3}{n-1}$. Here, the convention $\binom{n}{r}=0$ for $n <r$ is used. From this, we can explicitly compute the number of parameter $K$ as a function of $\alpha$. Let us define a decreasing sequence \[\alpha_{\ell} = \begin{cases}
 \begin{aligned}
 &1/2 && \text{ if } \ell =1\\ &1/({1-\lambda_\ell})={1}/({\ell ^2 +(n-2)\ell - (n-2)})	&& \text{ if } \ell \ge 2,
 \end{aligned}
	
 \end{cases}
\]
then 
\bea K= \frac{n+2\ell -1}{n+\ell -1} \binom{n+\ell -1}{n-1} &&\text{ for } \alpha \in [\alpha_{\ell+1},\alpha_\ell )
. \eea 
When $\alpha \in [\alpha_2, \alpha_1) = [1/(n+2), 1/2)$, $K= n+1$. i.e. solutions $\Sigma_\mathbf{y}$ in Theorem \ref{thm-mainexistence} are generated by translations in $\mathbb{R}^{n+1}$. In fact, by using the technique developed in \cite{CCK}, it is not difficult to show the converse that any translator which is asymptotically round equal to the rotationally symmetric translator upto a translation in $\mathbb{R}^{n+1}$. This type of classification question will be investigated in future research. When $n=2$, $\alpha_\ell = 1/\ell^2$ for $\ell \ge 2$ and $K=2\ell +1 $ if $\alpha \in [a_{\ell +1},a_\ell )$. The eigenfunctions on $\mathbb{S}^1$ are $1$, $\cos (k \theta)$, and $\sin (k \theta)$ for $k \ge1$. 
	
\end{example}

 If $\alpha > \frac1{n+2}$, then the round sphere $h\equiv1$ is the only $\frac{\alpha}{1-\alpha}$-shrinker in $\mathbb{R}^n$ \cite{brendle2017asymptotic}.  In the affine-critical case $\alpha=\frac{1}{n+2}$, classical result states that the convex paraboloids are the only $\alpha$-translators (in $\mathbb{R}^{n+1}$) and the ellipsoids are the only $\frac{\alpha}{1-\alpha}$-shrinkers (in $\mathbb{R}^n$). In the sub-affine-critical case $\alpha<\frac{1}{n+2}$, the existence of non-radial shrinker was shown by B. Andrews in \cite{andrews2000motion}. 
 
\begin{example}[Non-radial shrinkers] Andrews obtatiend the existence  of non-radial shrinker through a perturbation of radial solution using parabolic method. In our setting, this existence result can be stated as follows: 
\begin{theorem}[c.f. Theorem 18 \cite{andrews2000motion}] \label{theorem-andrews2000}Let $\Gamma$ be a proper subgroup of $SO(n)$ such that for every $z\in \mathbb{S}^{n-1}$, the orbit of $z$ under $\Gamma$ spans $\mathbb{R}^n$ (that is, the inclusion of $\Gamma$ in $SO(n)$ is an irreducible representation). Let $\lambda$ be the largest eigenvalue corresponding to a non-trivial $\Gamma$-invariant eigenfunction to $L=\Delta +(n-1)$. Then for $\frac{\alpha}{1-\alpha} \in (0,-\frac 1{\lambda})$, there exists a non-spherical, $\Gamma$-symmetric, smooth, strictly convex $\frac{\alpha}{1-\alpha} $-shrinker in $\mathbb{R}^n$. 
\end{theorem}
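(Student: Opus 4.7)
The plan is to obtain the shrinker as a bifurcation from the round sphere as $\beta := \alpha/(1-\alpha)$ decreases through $\beta_\ast := -1/\lambda$. Define $F(h,\beta) := h - (\det_{\bar g}\kr_{ij}[h])^{-\beta}$, so that the sphere $h\equiv 1$ is a trivial solution of $F(\cdot,\beta)=0$ for every $\beta>0$. Since $\kb^{ij}[1] = \bar g^{ij}$, the linearization at $h\equiv 1$ is
\[
D_h F(1,\beta)[f] \;=\; f \;+\; \beta L f \;=\; (1+\beta(n-1))f \;+\; \beta \Delta f,
\]
with $L = \Delta + (n-1)$ on $\mathbb{S}^{n-1}$. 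Restricting $F$ to the closed subspace $C^{2,\gamma}_\Gamma(\mathbb{S}^{n-1})$ of $\Gamma$-invariant support functions incorporates the symmetry; by irreducibility of the $\Gamma$-action, no linear combination of the coordinate functions $x_1,\dots,x_n$ (the eigenvalue-$0$ modes of $L$) is $\Gamma$-invariant. Hence on this subspace the spectrum of $L$ jumps from the simple eigenvalue $n-1$ past $0$ directly to $\lambda<0$, and the first $\beta>0$ at which $D_h F(1,\beta)$ fails to be an isomorphism is exactly $\beta_\ast$.

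Next I would apply the Crandall--Rabinowitz bifurcation theorem at $(1,\beta_\ast)$. Assuming $\lambda$ is simple in the $\Gamma$-invariant sector (otherwise one refines $\Gamma$ or invokes an equivariant bifurcation theorem), let $\varphi$ be a normalized $\Gamma$-invariant eigenfunction with $L\varphi = \lambda \varphi$. Self-adjointness of $L$ identifies kernel with cokernel, and the transversality condition reduces to checking that the crossing direction $\partial_\beta D_h F(1,\beta_\ast)[\varphi] = L\varphi = \lambda\varphi$ lies outside the range of $D_h F(1,\beta_\ast)$, which holds because $\la \varphi,\lambda\varphi\ra \ne 0$. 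The theorem then produces a smooth family $\{(h_s,\beta_s)\}_{|s|<\delta}$ of nontrivial $\Gamma$-invariant solutions with $h_s = 1 + s\varphi + o(s)$ and $\beta_s\to\beta_\ast$; strict convexity holds for small $|s|$ since $\kr_{ij}[h]>0$ is an open condition at $h=1$, and smoothness follows by elliptic bootstrap for the fully nonlinear shrinker equation.

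To cover the full interval $\beta \in (0,\beta_\ast)$ rather than merely a neighbourhood of $\beta_\ast$, I would invoke Rabinowitz's global alternative: the connected component of nontrivial solutions in $C^{2,\gamma}_\Gamma \times (0,\beta_\ast]$ is either unbounded, returns to the trivial branch at another bifurcation value, or reaches the boundary of the open cone of strictly convex support functions. The a priori estimates for convex $\beta$-shrinkers in the subcritical regime (uniform upper and lower bounds for $h$, diameter, and principal radii) developed in \cite{andrews2000motion} rule out both unboundedness and loss of strict convexity, while the explicit enumeration of subsequent bifurcation values on the trivial branch locally excludes the second alternative. Equivalently---and this is the viewpoint originally taken in \cite{andrews2000motion}---one can realize the same family as long-time limits of the volume-normalized $\beta$-GCF started from small $\Gamma$-symmetric $\varphi$-perturbations of the sphere: the flow preserves the $\Gamma$-symmetric class and decreases an entropy, and the instability of the sphere for $\beta<\beta_\ast$ forces any $\omega$-limit shrinker to be non-spherical.

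The main obstacle is precisely this global continuation step: maintaining uniform geometric control of the branch all the way down to $\beta\downarrow 0$, so that strict convexity, smoothness, and nondegeneracy of $h$ persist. A secondary difficulty appears when $\lambda$ has multiplicity greater than one in the $\Gamma$-invariant sector, in which case the scalar Crandall--Rabinowitz argument must be replaced by an equivariant bifurcation theorem in the spirit of Sattinger or by a degree-theoretic count.
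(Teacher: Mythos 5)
This theorem is \emph{cited}, not proved, in the paper: the author attributes it to Andrews (Theorem 18 of \cite{andrews2000motion}) and characterizes the original argument as ``a perturbation of radial solution using parabolic method.'' So there is no in-paper proof to compare against; the relevant comparison is with Andrews' own argument, which your closing paragraph correctly identifies: for each fixed $\beta\in(0,\beta_\ast)$ one runs the volume-normalized $\beta$-Gauss curvature flow from a small $\Gamma$-symmetric perturbation of the sphere, uses the monotone entropy and the linear instability of the sphere in the $\Gamma$-invariant sector to rule out convergence back to the round limit, and extracts a non-spherical $\Gamma$-symmetric shrinker from the $\omega$-limit. That argument is pointwise in $\beta$ and requires no continuity along a branch.

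Your primary route --- Crandall--Rabinowitz local bifurcation at $\beta_\ast=-1/\lambda$ in the $\Gamma$-invariant subspace, then Rabinowitz's global alternative --- is a genuinely different, elliptic approach, and the local part is essentially correct: the linearization $f\mapsto f+\beta Lf$ degenerates on the $\mu$-eigenspace of $L$ precisely when $\beta=-1/\mu$, and irreducibility of $\Gamma\hookrightarrow SO(n)$ is exactly what removes the coordinate functions (eigenvalue $0$) from the $\Gamma$-invariant sector, making $\beta_\ast$ the first positive degeneracy. But the global step as written has a real gap, which you flag yourself. Rabinowitz's alternative only says the component containing $(1,\beta_\ast)$ is unbounded or reconnects to the trivial line at another $\beta$; it does not say the component surjects onto $(0,\beta_\ast)$ in the $\beta$-coordinate. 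Even with a priori $C^{2,\gamma}$ and nondegeneracy bounds the branch could fold back, and reconnection at the next $\Gamma$-invariant eigenvalue $\lambda'<\lambda$ would only give $\beta\in(-1/\lambda',\beta_\ast)$, not all of $(0,\beta_\ast)$; your remark that the enumeration ``locally excludes the second alternative'' does not address this. There is also the multiplicity issue at $\lambda$ you note, which is not cosmetic for the hypercube and icosahedral symmetries. Since the a priori estimates you would need to control the branch are themselves obtained in \cite{andrews2000motion} via the parabolic method, the flow argument you mention at the end is not merely equivalent --- it is the shorter and self-contained route, and the one the paper actually relies on.
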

As an application of Theorem \ref{theorem-andrews2000}, when $n=3$, there is a shrinking surfaces in $\mathbb{R}^3$ with tetrahedral symmetry for $\frac{\alpha}{1-\alpha} \in (0,\frac 1{10})$; one with octahedral symmetry for $\frac{\alpha}{1-\alpha} \in (0, \frac 1{18})$; one with icosahedral symmetry for $\frac{\alpha}{1-\alpha} \in (0,\frac{1}{40})$ \cite[Corollary 21]{andrews2000motion}. Similarly, for higher dimension, there is a shrinking soliton in $\mathbb{R}^n$ with the symmetry of $n+1$-simplex for $\frac{\alpha}{1-\alpha} \in (0, \frac{1}{2n+4})$; one with the symmetry of a regular hypercube for $\frac{\alpha}{1-\alpha} \in (0,\frac{1}{3n+9})$ \cite[Corollary 22]{andrews2000motion}. Theorem \ref{thm-mainexistence} applies for each non-radial shrinker and shows the existence of translating solitons which are asymptotically non-rotational.

Note  $\frac{\alpha}{1-\alpha} = -\frac{1}{\lambda}$ $\Leftrightarrow$ $\alpha = \frac{1}{1-\lambda}$ and it is not a huge coincidence that $\frac{1}{1-\lambda}$ also appears in the definition of $\alpha _\ell$ in Example \ref{example-sphere}. One could understand this from an aspect of bifurcation: let $\Gamma$ be a subgroup of $SO(2)$ satisfying the conditions of Theorem \ref{theorem-andrews2000} and $\varphi$ and $\lambda$ be corresponding $\Gamma$-invariant eigenfunction and eigenvalue. At $\alpha=\frac{1}{1-\lambda}$, the operator \eqref{eq-calL} admits a Jacobi field(kernel) $w(s,\theta)=\varphi(\theta) e^{\sigma s}$. When $\alpha$ becomes smaller than $\frac{1}{1-\lambda}$, this Jacobi field creates two options: either this adds another parameter in the existence of asymptotically round solution whose growth rate is close but strictly smaller than $\sigma$, or this creates a new translating soliton which is asymptotic to a $\Gamma$-invariant shrinking soliton at infinity. It is expected that such a shrinking soliton is not radial, but converges to radial one as $\alpha$ approaches to $\frac{1}{1-\lambda}$ from the left. See \cite{andrews2003classification} where this property is shown for shrinking curves in $\mathbb{R}^2$.
	
\end{example}

In fact, when $n=2$, the $\frac{\alpha}{1-\alpha}$-shrinkers in $\mathbb{R}^2$ and $\alpha$-translators in $\mathbb{R}^3$ are completely classified for $\alpha\le \frac{1}{4}=\frac{1}{n+2}$. Except the classical affine critical case $\alpha=\frac{1}{4}$, the classification for shrinker is shown by Andrews \cite{andrews2003classification} and the classification for translator is recently shown by the author, K. Choi, and S. Kim in \cite{CCK}. \cite{andrews2003classification} shows, for each $k\ge3$, there is a $k$-fold symmetric $\frac{\alpha}{1-\alpha}$-shrinking curve in $\mathbb{R}^2$ for $\alpha  \in (0,k^{-2})$ (this corresponds to eigenfunction $\cos k\theta$ on $\mathbb{S}^1$) and there is no other non-radial shrinking curve for $\alpha<\frac{1}{4}$ except their rotations. In \cite{CCK}, the existence of translator is shown as in Theorem \ref{thm-mainexistence} and arbitrary translator is shown to be exactly one of constructed solutions. We expect that a similar classification should hold for higher dimensions and conjecture this as follows.
\begin{conjecture}[Sub-affine-critical translators] \label{conjecture} For $\alpha<\frac{1}{n+2}$, every $\alpha$-translator in $\mathbb{R}^{n+1}$ must be asymptotic to a $\frac{\alpha}{1-\alpha}$-shrinker in $\mathbb{R}^n$. Moreover, it is equal to one of solutions constructed in Theorem \ref{thm-mainexistence}. 
	
\end{conjecture}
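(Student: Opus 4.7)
The plan is to split the conjecture into an asymptotic statement and a rigidity statement. First, I would show that any $\alpha$-translator $\Sigma$ with $\alpha<\tfrac{1}{n+2}$ admits, after parabolic rescaling in the $e_{n+1}$-direction, a unique blow-down limit of the separation-of-variables form $Al^{\sigma} h(\theta)$ for some smooth shrinker $h$. Second, I would run a Jacobi-field argument on the perturbation $w=S-Al^{\sigma}h$ to match $\Sigma$ with one of the solutions $\Sigma_{\mathbf{y}}$ built in Theorem \ref{thm-mainexistence}. This is the same blueprint as \cite{CCK} for $n=2$, with the major adaptation being that in higher dimension the shrinker $h$ is no longer known a priori, so it has to be produced \emph{by} the blow-down analysis rather than assumed.

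For the asymptotic part, I would first invoke the JW result that $\Sigma$ is an entire convex graph in the subaffine-critical regime, and rewrite $\Sigma$ in terms of its level-set support function $S(l,\theta)$ solving \eqref{eq-translatorsupport}. Setting $\widetilde S_R(l,\theta):=R^{-\sigma}S(Rl,\theta)$ and sending $R\to\infty$ formally drops the $(1+S_l^2)^{\frac{n+2}{2}-\frac{1}{2\alpha}}$ factor (since $\sigma<1$ makes $S_l\to\infty$), giving the blow-down equation \eqref{eq-blowdownsupport}. The work is to make this rigorous: using convexity and the entire-graph property to get uniform $C^0$ bounds on $\widetilde S_R$ on compact subsets of $(0,\infty)\times\mathbb{S}^{n-1}$, then pointwise principal-radii and curvature bounds from Pogorelov-type estimates for Monge--Amp\`ere, and finally subsequential $C^{\infty}_{\mathrm{loc}}$ convergence to a limit $S_{\infty}$. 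Positivity of the Gauss curvature of $\Sigma$ plus the homogeneity inherited from the rescaling forces $S_{\infty}(l,\theta)=Al^{\sigma}h(\theta)$, where $h$ must be a shrinker (by substituting the ansatz back into \eqref{eq-blowdownsupport}). Uniqueness of the blow-down limit (so that the full family, not just a sequence, converges) should follow from a \L ojasiewicz--Simon inequality for the elliptic equation \eqref{eq-blowdownsupport}, using that the homogeneous profile is a critical point of a convex geometric functional related to the Brunn--Minkowski inequality in Lemma \ref{lem-spectrum}.

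For the rigidity part, write $S=Al^{\sigma}h+w$ and recall from Lemma \ref{lemma-eqS} that $w$ satisfies \eqref{eq-18}, i.e.\ $\mathcal{L}w=-E_1(w)-E_2(w)$ with $\mathcal{L}$ as in \eqref{eq-calL}. Using the orthonormal basis $\{\varphi_i\}$ of Definition \ref{def-spectrumL}, expand $w(s,\theta)=\sum_i a_i(s)\varphi_i(\theta)$. The key fact is that each mode $a_i(s)$ satisfies a second-order ODE whose characteristic exponents are exactly $\beta_i^{\pm}$, perturbed by the nonlinear errors controlled in Lemma \ref{lem-Eest}. A standard ODE ``exponential dichotomy'' argument, run backward from $s=\infty$ and using that $w=o(e^{\sigma s})$ by step one, shows that modes with $\beta_i^+\geq\sigma$ must vanish identically, while modes with $\beta_i^+<\sigma$ contribute a leading coefficient $y_i\in\mathbb{R}$. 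This gives a map from translators asymptotic to $Al^{\sigma}h$ into $\mathbb{R}^K$. Uniqueness of the initial-data problem for \eqref{eq-18} with prescribed Jacobi-field asymptotics, which again comes from a contraction-mapping argument in the same weighted H\"older spaces used in Theorem \ref{thm-mainexistence}, identifies $\Sigma$ with $\Sigma_{\mathbf{y}}$.

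The hardest step will be the blow-down part, specifically three points. First, a global Liouville-type classification for convex entire solutions of the blow-down equation \eqref{eq-blowdownsupport} that are $\sigma$-homogeneous: one has to rule out non-product shrinker-like limits, and in higher dimension we cannot lean on the classification of shrinking curves as \cite{CCK} did. Second, uniqueness of the blow-down (that no rotation or tangential drift is lost in the limit) requires a \L ojasiewicz--Simon inequality near a possibly non-integrable critical point, which is delicate when the shrinker $h$ admits unclassified Jacobi fields. Third, one must rule out translators with non-polynomial, e.g.\ exponential, growth in the $e_{n+1}$-direction that could escape the homogeneous scaling ansatz altogether; this is where the sharp bound $\alpha<\tfrac{1}{n+2}$ on the exponent and the affine non-criticality enter, and a careful use of the maximum principle on the quantity $(1+|Du|^2)^{1/2}u^{-1/\sigma}$ or its support-function analogue will be required.
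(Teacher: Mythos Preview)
The statement you are attempting to prove is explicitly labeled a \emph{Conjecture} in the paper, and the paper does \emph{not} provide a proof of it. In the introduction the author writes that ``the converse part, where every soliton is shown to be asymptotic to a homogeneous profile and it is exactly one of solution already constructed, faces new challenges that we want to resolve in future research.'' So there is no proof in the paper to compare your proposal against; the paper only proves the existence direction (Theorem \ref{thm-mainexistence}) and leaves the classification open.

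That said, your outline is a reasonable and well-informed sketch of how one would attack the conjecture, and it correctly identifies the two-stage structure (blow-down to a shrinker, then Jacobi-field matching) that worked for $n=2$ in \cite{CCK}. You are also honest about where the genuine obstructions lie. But you should be aware that what you have written is a research program, not a proof: the three ``hardest steps'' you flag at the end are precisely the open problems. In particular, (i) the uniqueness of the blow-down via a \L ojasiewicz--Simon inequality is not available off the shelf because the shrinker equation in $\mathbb{R}^n$ for $n\geq 3$ has an unclassified moduli space and potentially non-integrable Jacobi fields; (ii) the Liouville classification for homogeneous solutions of \eqref{eq-blowdownsupport} is itself equivalent to classifying shrinkers, which is a major open question the paper highlights; and (iii) the a priori growth bound ruling out non-polynomial behavior is not established anywhere in the paper and would itself be a substantial new result. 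So your proposal does not close the conjecture; it restates it as a sequence of sub-problems, each of which remains open.
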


If a translator is asymptotic to a shrinker, it necessarily has the same growth rate in all directions. We expect such a homogeneous growth rate only in sub-affine-critical range and, in super-affine-critical range, there exist translators which have inhomogeneous growth rates in different directions and hence the (convex) level set becomes degenerate as it gets higher. Finally, for $\alpha>1/2$, the translators can only exist as graphs on bounded domains and the classification is shown by J. Urbas in \cite{U98GCFsoliton,Urbas1988}.

\subsection{Proof of Theorem \ref{thm-mainexistence}}
In this section, the H\"older exponent $\beta$ is assumed to belong $\beta\in(0,1)$ and all constants in estimates may depend on $\beta$ (although we do not specify this).  From Lemma \ref{lemma-eqS} and \eqref{eq-calL}, the translator equation is equivalent to find a solution $w(s,\theta)$ to the equation  
\be\label{eq-wtranslator} \mathcal{L}w =E(w) \text{ where }E(w)=-E_1(w) -E_2(w).\ee In the first part, we solve \eqref{eq-wtranslator} on exterior domains $B_R^c= [R,\infty)\times \mathbb{S}^{n-1}$ where $R>1$ is sufficiently large. Finally, we use the convention $\beta^+_{-1}:=\beta^-_0$. The proof follows similar lines of argument from \cite[Section3]{CCK}.  For example, the next Lemma \ref{lem-ex-linear}, Proposition \ref{prop-exist-exterior0}, and Proposition \ref{prop-exist-exterior'} follows by the same proof of Lemma 3.4, Proposition 3.6 and Proposition 3.7 in \cite{CCK}. For those proofs, we will give brief sketches instead of details.

\begin{lemma}[Solvability of linear equation]\label{lem-ex-linear}
 Let  $\gamma$ be a constant such that  $ \beta^+_{m-1}< \gamma  <\beta^+_m$  for   some $m\in \{0,\ldots, K-1\}$  (with  the convention $\beta^+_{-1}:= \beta^-_0$).    
For each  $g$ in $C^{0,\beta,\gamma}_R$,    there exists $w\in C^{2,\beta,\gamma}_R $ which solves the linear equation 
\be \label{eq-w-linear}
\begin{cases}\begin{aligned}   &\mathcal{L}w   =g\quad && \hbox{ in  } B_R^c \\ & w=0\quad  && \hbox{ on } \partial B_R^c .  \end{aligned} \end{cases} \ee 
More precisely, this $w$ is explicitly written as    
\be\label{eq-w-decomp}
w=\sum_{j=0}^\infty  w_j(s) \varphi_j(\theta),\ee
 where 
\begin{align} 
  w_j (s)&= -e^{\beta^-_j s} \int_R^s e^{(\beta^+_j-\beta^-_j)r }\int_r^\infty e^{-\beta^+_j t}g_j(t) dt dr \quad \text{ for  }j \ge m,  \label{eq-wj}\\
  w_j (s)&= e^{\beta^-_j s} \int_R^s e^{(\beta^+_j-\beta^-_j)r }\int_R^r e^{-\beta^+_j t}g_j(t) dt dr \quad  \text{ for }0\le j <m \label{eq-wj2}
\end{align}
with  $g_j(s):= \la  g(s,\cdot),\varphi_j \ra _h$. 
Moreover, there holds 
\bea \label{eq-w-est}
 \Vert w \Vert_{C_R^{2,\beta,\gamma}} \le C \Vert g \Vert_{C_R^{0,\beta,\gamma} }
 \eea  
 for some constant $C>0$ independent of $R>1$.  

\begin{proof}The proof immediately follows by the linear elliptic regularity theory once we establish the following $L^2$-estimate with weight:
 \bea\label{eq-L-2-linear-sol}
  e^{-\gamma s}\Vert w(s,\cdot)\Vert_{L^2_h} =  e^{-\gamma s}  \Big(\sum_{j=0}^\infty {w_j^2(s)} \Big)^{1/2}   \le C \Vert g \Vert_{C_R^{0,0,\gamma}}\quad\hbox{for all $s\geq R$}.
  \eea   
This estimate follows from an integral estimate by working with \eqref{eq-wj} and \eqref{eq-wj2}. (See \cite[Claim 3.1]{CCK}.) 
\end{proof}
   \end{lemma}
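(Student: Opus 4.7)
The plan is to diagonalize $\mathcal L$ against the orthonormal eigenbasis $\{\varphi_j\}$ supplied by Lemma~\ref{lem-spectrum}. Writing $w(s,\theta)=\sum_j w_j(s)\varphi_j(\theta)$ and $g(s,\theta)=\sum_j g_j(s)\varphi_j(\theta)$ decouples $\mathcal L w=g$ into a countable family of second order linear ODEs
\[ w_j''+p\,w_j'+\sigma(1-\sigma)\lambda_j w_j = g_j,\qquad s\ge R,\qquad w_j(R)=0,\]
where $p=\frac{(n-1)-\alpha(n-2)}{1+\alpha(n-2)}$ and the characteristic roots of the homogeneous equation are exactly $\beta_j^\pm$. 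For each mode the task will be to select a particular integral that simultaneously meets the Dirichlet condition at $s=R$ and has growth at infinity compatible with $e^{\gamma s}$.

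For modes $j\ge m$ one has $\beta_j^+>\gamma$, so the inner integral $\int_r^\infty e^{-\beta_j^+ t}g_j(t)\,dt$ converges absolutely whenever $g_j(t)$ has growth $e^{\gamma t}$, making the double integral in \eqref{eq-wj} well defined; a direct differentiation using $\beta_j^++\beta_j^-=-p$ and $\beta_j^+\beta_j^-=\sigma(1-\sigma)\lambda_j$ verifies the ODE, and $w_j(R)=0$ is built into the formula. For $j<m$ the inner integral from $r$ to $\infty$ would no longer converge at the target rate; the remedy is to integrate $e^{-\beta_j^+ t}g_j(t)$ outward from $R$ to $r$ as in \eqref{eq-wj2}, and the same computation verifies the ODE and boundary condition.

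The heart of the argument is the weighted $L^2$-bound $e^{-\gamma s}\|w(s,\cdot)\|_{L^2_h}\le C\|g\|_{C^{0,0,\gamma}_R}$. Setting $M_j:=\sup_{t\ge R}e^{-\gamma t}|g_j(t)|$ and substituting $|g_j(t)|\le M_j e^{\gamma t}$ into \eqref{eq-wj}, the inner integral is bounded by $\frac{M_j}{\beta_j^+-\gamma}e^{(\gamma-\beta_j^+)r}$, and the outer integration yields $|w_j(s)|\le \frac{C M_j}{(\beta_j^+-\gamma)(\gamma-\beta_j^-)}e^{\gamma s}$; the convergence constraint $\gamma>\beta_j^-$ holds for every $j$ because $\beta_j^-\le \beta_0^-<\beta_0^+\le \beta_{m-1}^+<\gamma$. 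The case $j<m$ is symmetric. Combining these pointwise modal estimates with Parseval in $\theta$ and the spectral asymptotics $(\beta_j^+-\gamma)(\gamma-\beta_j^-)\sim\sigma(1-\sigma)|\lambda_j|\to\infty$ gives the target $L^2$-bound, following the scheme of \cite[Claim 3.1]{CCK}. Once secured, interior Schauder estimates applied to the dyadic cylinders $[s,s+1]\times \mathbb S^{n-1}$, where $\mathcal L$ is uniformly elliptic with smooth coefficients depending only on $h,\alpha,n$, together with the $C^{0,\beta,\gamma}_R$-control on $g$, upgrade the $L^2$-bound to the full $C^{2,\beta,\gamma}_R$-bound \eqref{eq-w-est}.

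The main obstacle I expect is the summability of the mode series: individual modal kernels only decay like $|\lambda_j|^{-1}$, which is borderline for high $n$, so the final step requires trading some spectral decay against the extra Hölder regularity of $g$ on $\mathbb S^{n-1}$ (equivalently, working with the coefficients $g_j$ in an $\ell^2$-weighted space rather than taking sup-then-sum naively). The strict gap hypothesis $\beta_{m-1}^+<\gamma<\beta_m^+$ is essential because it prevents resonance between $\gamma$ and any $\beta_j^\pm$, keeping all the exponential integrals above convergent with the clean $e^{\gamma s}$-rate; on a resonance, logarithmic factors would appear.
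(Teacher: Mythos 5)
Your proposal follows essentially the same route as the paper's: mode-wise ODE solving against the spectral basis of Lemma~\ref{lem-spectrum} with characteristic roots $\beta^\pm_j$, the formulas \eqref{eq-wj}--\eqref{eq-wj2} chosen by which root exceeds $\gamma$, a weighted $L^2$-bound, and an elliptic-regularity upgrade; the paper compresses all of this to a citation of \cite[Claim~3.1]{CCK}. Your computations of the modal ODE, the Vieta relations $\beta^+_j+\beta^-_j=-p$, $\beta^+_j\beta^-_j=\sigma(1-\sigma)\lambda_j$, and the pointwise bound $|w_j(s)|\le C\,M_j\,e^{\gamma s}/[(\beta^+_j-\gamma)(\gamma-\beta^-_j)]$ with $M_j=\sup_t e^{-\gamma t}|g_j(t)|$ are all correct.

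One correction on your last paragraph: you rightly flag the summability issue (sup-then-sum gives $\sum_j|\lambda_j|^{-2}$, which by Weyl's law on $\mathbb{S}^{n-1}$ diverges once $n\ge5$), but the remedy is not to trade against H\"older regularity of $g$ in $\theta$ -- notice that \eqref{eq-L-2-linear-sol} is stated with only $\Vert g\Vert_{C^{0,0,\gamma}_R}$ on the right, i.e.\ no H\"older control is invoked at this stage (it is only used later in the Schauder upgrade). The correct bookkeeping is a Schur-test/Minkowski argument at the kernel level: write $w_j(s)=\int_R^\infty K_j(s,t)\,g_j(t)\,dt$, apply Cauchy--Schwarz against the weight $e^{\gamma t}$ to get
\[
|w_j(s)|^2\le\Big(\int_R^\infty|K_j(s,t)|e^{\gamma t}\,dt\Big)\Big(\int_R^\infty|K_j(s,t)|e^{-\gamma t}g_j(t)^2\,dt\Big),
\]
sum over $j$, and use $\sum_j g_j(t)^2=\Vert g(t,\cdot)\Vert_{L^2_h}^2\lesssim e^{2\gamma t}\Vert g\Vert_{C^{0,0,\gamma}_R}^2$ together with $\int_R^\infty\sup_{j\ge m}|K_j(s,t)|\,e^{\gamma t}\,dt\lesssim e^{\gamma s}$; the latter holds since $\sup_{j\ge m}|K_j(s,t)|\lesssim e^{\beta^-_m(s-t)}$ for $t\le s$ and $\lesssim e^{\beta^+_m(s-t)}$ for $t\ge s$, and $\beta^-_m<\gamma<\beta^+_m$. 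This furnishes \eqref{eq-L-2-linear-sol} in all dimensions with only $C^0$-in-$\theta$ control on $g$, after which your Schauder step on unit cylinders (where the H\"older norm of $g$ genuinely enters) gives \eqref{eq-w-est}.
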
 
  For given $R>1$ and $\gamma \in (\beta^+_{m-1},\beta^+_m)$, let us denote the inverse of $\mathcal{L}$, which assigns $w$ for each given $g$ as in \eqref{eq-wj} and \eqref{eq-wj2} by $\mathcal{H}_{R,\gamma}:C^{0,\beta,\gamma}_R \to C^{2,\beta,\gamma}_R$. We simply write it $\mathcal{H}$ if there is no other confusion. In the next proposition, we  show if a translator (on exterior domain) and a Jacobi field are given, then we can construct another translator (on exterior domain) whose difference with given translator is dominated by the Jacobi field. The proof is based on the fixed point argument which uses Lemma \ref{lem-ex-linear}. 
  
  \begin{prop}\label{prop-exist-exterior0} Suppose $w \in C_{R_0}^{2,\beta,\gamma_1} $ solves \eqref{eq-wtranslator} on $B_{R_0}^c$  for $R_0>1$ and  $\gamma_1 \in(0,\sigma)$ and $u_0 \in C^{2,\beta,\gamma_2}_{R_0}  $  solves  $\mathcal{L} u_0 =0$ on $B_{R_0}^c$     for     $\beta_{m-1}^+<\gamma_2  \leq   \beta_m^+$  with  $m\in\{0,1,\cdots,K-1\}$.
   	Then there exists $u\in  C^{2,\beta,\gamma_2-\e} _R$ with  $R>R_0$ and $\e>0$ such that $w+u_0+u$  solves \eqref{eq-wtranslator} on $B_R^c$ with  zero boundary condition. 
  Here,  constants $R>R_0$ and    $\e>0$ may depend on $ \alpha$, $h$,  $ \gamma_1,$ $  \gamma_2 $,   $ \beta^+_{m-1}$, $ \beta^+_{m}$, $\Vert w\Vert_{C_{R_0}^{2,\beta,\gamma_1}} $ and $\Vert u_0\Vert_{C_{R_0}^{2,\beta,\gamma_2}}$. 

\begin{proof} For $R>R_0$ and $\gamma=\gamma_2-\eps \in (\beta^+_{m-1},\beta^+_m)$ to be chosen below, let us consider the inverse operator $\mathcal{H}:C^{0,\beta,\gamma}_R \to C^{2,\beta,\gamma}_R$. We define $u_1=\mathcal{H}(E(w+u_0)-E(w))$ and similarly define sequence of functions $u_{k+1}= \mathcal{H}(E(w+\sum_{i=0}^k u_i)-E(w+\sum_{i=0}^{k-1} u_i)) $ for $k\ge1$. Our goal is to show that if we choose sufficiently large $R$ and small $\eps>0$, $\Vert u_i\Vert _{C^{2,\beta,\gamma}_R}$ geometrically converges to $0$ as $i\to 0$. In that case, we denote $u=\sum_{i=1}^\infty u_i$ and it is straightforward to see that $w+u_0+u$ solves \eqref{eq-wtranslator}.

First, replace $\gamma_1$ by $\max (\gamma_1,\gamma_2)$. In view of Lemma \ref{lem-Eest} and \eqref{eq-w-est}, if we choose $$\eps =  \frac 12 \min(\sigma-\gamma_1, 2(\sigma-1), \gamma_2-\beta^+_{m-1}) $$  then there holds an estimate
\[\Vert u_{k+1} \Vert_{C^{2,\beta,\gamma}_R} \le Ce^{-\eps R}  ( 1+ \Vert w\Vert_{C^{2,\beta,\gamma_1}_R}+\sum_{i=0}^k \Vert u_i\Vert_{C^{2,\beta,\gamma_1}_R}  )\Vert u_k \Vert _{C^{2,\beta,\gamma_2}_R}.\]
Using above estimate and trivial inequality $\Vert v\Vert_{C^{2,\beta,\gamma}_R }\le \Vert v\Vert_{C^{2,\beta,\gamma_2}_R}$, we conclude the geometric convergence of $\Vert u\Vert_{C^{2,\beta,\gamma}_R}$ for large $R$.  

\end{proof}
   \end{prop}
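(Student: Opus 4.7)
The plan is to solve for $u$ the equation
\[
\mathcal{L} u \;=\; E(w+u_0+u) - E(w)
\]
on $B_R^c$ with zero boundary data on $\partial B_R^c$. Since $\mathcal{L} w = E(w)$ and $\mathcal{L} u_0 = 0$, this gives $\mathcal{L}(w+u_0+u)=E(w+u_0+u)$, i.e.\ $w+u_0+u$ solves \eqref{eq-wtranslator}. The natural approach is a Banach fixed-point argument in a weighted Hölder space, using as inverse the operator $\mathcal{H}=\mathcal{H}_{R,\gamma}$ provided by Lemma \ref{lem-ex-linear}.

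First I would pick $\gamma := \gamma_2 - \e$, with $\e>0$ small enough that $\gamma\in(\beta^+_{m-1},\beta^+_m)$, so that $\mathcal{H}: C^{0,\beta,\gamma}_R\to C^{2,\beta,\gamma}_R$ is bounded with norm independent of $R>1$ by \eqref{eq-w-est}. Set
\[
T(v) \;:=\; \mathcal{H}\bigl(E(w+u_0+v) - E(w)\bigr),
\]
so that a fixed point of $T$ in a small ball of $C^{2,\beta,\gamma}_R$ gives the desired $u$. To check that $T$ is a contraction on a small ball once $R$ is large, I would combine \eqref{eq-w-est} with the nonlinear estimates of Lemma \ref{lem-Eest}. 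Because $E_1$ is at least quadratic in $e^{-\sigma s}(\cdot)$ and one of the two factors is $w+u_0$, which is $O(e^{\max(\gamma_1,\gamma_2) s})$, we get
\[
\bigl\| E_1(w+u_0+v) - E_1(w+u_0+v') \bigr\|_{C^{0,\beta,\gamma}_R} \;\le\; C\,e^{(\max(\gamma_1,\gamma_2)-\sigma)R}\,\|v-v'\|_{C^{2,\beta,\gamma}_R},
\]
and $E_2$ contributes an additional $e^{2(\sigma-1)s}$ factor, also small on $B_R^c$ since $\sigma<1$. Thus, setting $\eta := \min\bigl(\sigma-\max(\gamma_1,\gamma_2),\;2(1-\sigma)\bigr)>0$, I obtain $\|T(v)-T(v')\|_{C^{2,\beta,\gamma}_R}\le C e^{-\eta R}\|v-v'\|_{C^{2,\beta,\gamma}_R}$. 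Choosing $R$ large enough that $Ce^{-\eta R}<\tfrac12$ makes $T$ a contraction, and a separate estimate $\|T(0)\|\le Ce^{-\eta R}\|u_0\|$ confirms $T$ maps a small ball to itself. Banach's theorem then produces $u\in C^{2,\beta,\gamma}_R$.

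The main obstacle is to balance the conflicting demands on $\gamma$. On one hand, $\gamma$ must strictly avoid every spectral value $\beta^+_j$ in order for Lemma \ref{lem-ex-linear} to provide a bounded inverse; this forces $\e < \gamma_2 - \beta^+_{m-1}$. On the other hand, to activate the smallness from Lemma \ref{lem-Eest} we need $\gamma<\sigma$ and the weighted quadratic structure to produce a genuinely small factor on $B_R^c$; this is where one must use that both $\gamma_1$ and $\gamma_2$ lie strictly below $\sigma$, and also that $\sigma<1$ (since $\alpha<\frac{1}{n+2}$ implies $\sigma\in(0,1)$). Taking
\[
\e \;=\; \tfrac12\,\min\bigl(\sigma-\gamma_1,\;2(\sigma-1)\text{'s absolute value},\;\gamma_2-\beta^+_{m-1}\bigr)
\]
reconciles these constraints, and after this choice the only remaining freedom is to enlarge $R$ until the contraction constant is $<1$. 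Since $\mathcal{H}$ automatically enforces the zero boundary condition at $s=R$ via the integral formulas \eqref{eq-wj}--\eqref{eq-wj2}, the fixed point $u$ is automatically admissible, completing the construction.
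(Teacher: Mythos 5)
Your argument is correct and matches the paper's: the paper's recursive construction of $u_k$ is exactly the Picard iteration for your contraction $T$, and you arrive at the same choice of $\eps$ (and you correctly notice that the paper's factor $2(\sigma-1)$ in the $\min$ must be a sign slip for $2(1-\sigma)$, since $\sigma<1$ makes that term negative). The only place you could be more explicit is why $\eps$ itself must be bounded by $\sigma-\max(\gamma_1,\gamma_2)$ and $2(1-\sigma)$, not just by $\gamma_2-\beta^+_{m-1}$: these bounds are what ensure $E(w+u_0)-E(w)\in C^{0,\beta,\gamma}_R$ (i.e.\ that $T(0)$ is finite, because the inner weight carries a loss $e^{(\gamma_2-\gamma)s}=e^{\eps s}$ which must be absorbed), whereas the contraction constant $Ce^{-\eta R}$ is independent of $\eps$.
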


Next proposition shows we can perturb  given translator (on exterior domain) so that the level set is homothetic to the shrinker at the boundary with certain gradient condition. Later we will glue super and sub solutions along this boundary. Those super and sub solutions will have level sets that are homothetic to the shrinker.

\begin{prop}\label{prop-exist-exterior'}   Let $w \in C_{R_0}^{2,\beta,\gamma_1} $ be a solution to \eqref{eq-wtranslator} on $B_{R_0}^c$ for $R_0>1$ and    $\gamma_1 \in (0,\sigma)$. Let  $\gamma_2$ be a constant such that  $\gamma_1 <{\gamma_2  < \sigma} $. Then, there is a constant $R_1 >  R_0$ (depending on $\Vert w \Vert _{C^{2,\beta,\gamma_1}_{R_0}}$, $\gamma_1$ and  $\gamma_2$) such that for each $R\ge R_1$ there exists   $u \in C^{2,\beta, \gamma_0  }_{R}$, $\gamma_0=  -\frac{(n-1)-\alpha(n-2)}{2(1+\alpha(n-2))}$, such that $w+u$ solves \eqref{eq-wtranslator} on $B_R^c$ with  the following    boundary conditions: 
\be\label{eq-exist-ex-bd1'}
\left\{\ba
& 
 \frac{w+u}h = -e^{\gamma_2 R } \quad \text{ on } \p B_R,\\
&  \frac{\p }{\p s } \left(\frac{w+u}{h}\right) = -\beta^-_0 e^{\gamma_2 R}(1+o(1))\,\, \text{ on } \, \,\p B_R.
\ea\right.
\ee 
 Here, $  \frac{\p }{\p s }  (\frac{w+u}{h} ) = \sigma e^{\gamma_2 R}(1+o(1)) $ on $\p B_R$ means that for sufficiently large $R$, the corresponding solution  $u=u_R $ defined on $B_R^c$ satisfies  
 \bea\left. \left \Vert   \frac{\p }{\p s }\right\vert_{s=R} \left( \frac{ g(s,\cdot)+u_R(s,\cdot)}{h(\cdot) }\right)+ \beta^-_0 e^{\gamma_2 R}\right \Vert_{L^\infty(\mathbb{S}^1)} = o (e^{\gamma_2 R}) \quad\text{ as } R\to \infty.   \eea 
Note that $\gamma_0$ is chosen so to satisfy $\beta^-_0<\gamma_0<\beta^+_0=\sigma-1$. 

Similarly, for  each $R\ge R_1$  with  some large $R_1 >  R_0$,      there exists   a solution   $u \in C^{2,\beta,\gamma_0  }_R$ such that $w+u$ solves \eqref{eq-wtranslator} with  the following    boundary conditions:  
\be\label{eq-exist-ex-bd2'}
\left\{\ba
&
\frac{g+u}h   = e^{\gamma_2 R } \text{ on } \p B_R,\\
&  \frac{\p }{\p s } \left(\frac{g+u}{h}\right) = \beta^-_0 e^{\gamma_2R}(1+o(1))\,\, \text{ on } \, \,\p B_R.
\ea\right.
\ee
	
\begin{proof} We give a proof of the first part as the other part is similar. For $R\ge R_0$, let us consider $\hat g_R\in C^{2,\beta,\beta^-_0}_R$ which satisfies 
\be\left\{
\ba & \mathcal{L}(\hat g_R) =0&&\text{ in } B_R^c ,\\
&\hat g_R = -g &&\text{ on }\p B_R.
\ea\right.
\ee 
More specifically, if $g(R,\cdot) =\displaystyle \sum_{i=0}^\infty c_i  \varphi_i(\cdot) $, we may choose \bea  \hat g_R(s,\theta) = - \sum _{j=0}^\infty c_j e^{\beta^-_j (s-R)} \varphi_j(\theta)\quad \text{ in } B_R^c \eea and this $\hat g_R$ belongs to $C^{2,\beta,\beta^-_0}_R$ by the linear elliptic regularity theory. 
For $R\ge R_0$ to be chosen later, if we consider  
\bea  u_0(s,\theta) := \hat  g_R (s,\theta)- e^{\gamma_2R} e^{\beta^-_0 (s-R)}h(\theta),\eea then it is direct to check \be\label{eq-u_0cases}
 \begin{cases}
  \begin{aligned} &\hat L (u_0) = 0 && \text{ on } B_R^c , \\  
  &\frac{g+u_0}{h} = - e^{\gamma_2R } &&\text{ on } \p B_R, \\
 &\frac{\p}{\p s} \left(\frac{g+u_0}{h} \right)= -\beta^-_0 e^{\gamma_2 R} (1+o(1)) &&\text{ on } \p B_R.
 \end{aligned}
 \end{cases} 
 \ee

In Lemma \ref{lem-ex-linear}, let us choose $\gamma$ as $\gamma_0= -\frac{(n-1)-\alpha(n-2)}{2(1+\alpha(n-2))}\in (\beta^-_0,\beta^+_0),$ we obtain $\mathcal{H}_{R,\gamma_0}=\mathcal{H}:C^{0,\beta,\gamma_0}_R \to C^{2,\beta,\gamma_0}_R$. We define $u_1=\mathcal{H}(E(w+u_0)-E(w))$ and $u_{k+1}= \mathcal{H}(E(w+\sum_{i=0}^k u_i)-E(w+\sum_{i=0}^{k-1} u_i)) $ for $k\ge1$. By a similar argument as in the proof of Proposition \ref{prop-exist-exterior0}, we obtain that there is $\eps>0$ and $R_1\ge R_0$ such that for all $R$, \[\Vert u_{k+1}\Vert_{C^{2,\beta,\gamma_0}_R} \le \frac12 e^{-\eps R}\Vert u_k \Vert_{C^{2,\beta,\gamma_0}_R}. \]
It follows that $w+u_0 +\sum_{i=1}^\infty u_i = w +u$ solves \eqref{eq-wtranslator}. Moreover, \be \label{eq-u-u_0cases}\begin{cases} \begin{aligned}  & \sum_{i=1}^\infty u_i =0 &&\text{ on } \p B_R, \\
 &\frac{\p}{\p s}  \sum_{i=1}^\infty u_i =  O\left(e^{(  \gamma_2-\e)R}\right) &&\text{ on } \p B_R,
 \end{aligned} \end{cases} \ee  
where we used $\Vert \sum_{i=1}^\infty u_i \Vert _{C^{2,\beta,\gamma_0}_R} \le e^{-\eps R} \Vert u_0\Vert _{C^{2,\beta,\gamma_0}_R} $ in the second estimate. 
\end{proof}

\end{prop}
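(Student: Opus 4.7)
The plan is to produce $u$ as $u_0 + v$, where $u_0$ is an explicit linear barrier that kills $\mathcal{L}$ and realizes the prescribed Dirichlet data on $\partial B_R$ exactly, and $v = \sum_{i\ge 1}u_i$ is a decaying nonlinear correction built by a contraction iteration using the inverse $\mathcal{H}$ from Lemma \ref{lem-ex-linear}. Expanding the given $w(R,\cdot)=\sum_{j\ge 0}c_j\varphi_j$ in the spectral basis of Definition \ref{def-spectrumL}, let
\[
\hat g_R(s,\theta) := -\sum_{j\ge 0} c_j\, e^{\beta^-_j(s-R)}\varphi_j(\theta),
\]
which is the unique decaying solution of $\mathcal{L}\hat g_R = 0$ on $B_R^c$ with boundary data $-w$, and which lies in $C^{2,\beta,\beta^-_0}_R$ by linear elliptic regularity. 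Since $Lh=(n-1)h$ a direct check gives $\mathcal{L}(e^{\beta^-_0 s}h)=0$, so
\[
u_0(s,\theta) := \hat g_R(s,\theta) - e^{\gamma_2 R}\,e^{\beta^-_0 (s-R)}\,h(\theta)
\]
satisfies $\mathcal{L}u_0=0$ with $(w+u_0)|_{s=R} = -e^{\gamma_2 R}h$ and $\partial_s(w+u_0)|_{s=R} = -\beta^-_0 e^{\gamma_2 R}h + O(e^{\gamma_1 R})$; since $\gamma_1<\gamma_2$ and $h$ is bounded below, dividing by $h$ yields the $(1+o(1))$ factor in \eqref{eq-exist-ex-bd1'}.

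For the nonlinear correction, note that $\gamma_0 = -\tfrac{(n-1)-\alpha(n-2)}{2(1+\alpha(n-2))}$ is the midpoint of $(\beta^-_0,\beta^+_0)$, so Lemma \ref{lem-ex-linear} (case $m=0$) yields a uniformly bounded inverse $\mathcal{H}:C^{0,\beta,\gamma_0}_R\to C^{2,\beta,\gamma_0}_R$ returning $\mathcal{L}$-solutions with zero Dirichlet data. Define $u_1 := \mathcal{H}(E(w+u_0)-E(w))$ and recursively
\[
u_{k+1} := \mathcal{H}\!\left(E\Bigl(w+\sum_{i=0}^{k}u_i\Bigr) - E\Bigl(w+\sum_{i=0}^{k-1}u_i\Bigr)\right).
\]
Combining $\|u_0\|_{C^{2,\beta,\gamma_0}_R}\le Ce^{(\gamma_2-\gamma_0)R}$ with the quadratic $E_1$- and exponentially-small $E_2$-estimates of Lemma \ref{lem-Eest} (both applicable because $\gamma_2<\sigma$ keeps $\|e^{-\sigma s}(w+u_0)\|$ small on $B_R^c$), the same telescoping argument as in the proof of Proposition \ref{prop-exist-exterior0} produces
\[
\|u_{k+1}\|_{C^{2,\beta,\gamma_0}_R} \le C\,e^{-\eps R}\,\|u_k\|_{C^{2,\beta,\gamma_0}_R}
\]
for some $\eps=\eps(\alpha,h,\gamma_1,\gamma_2)>0$ and all $R\ge R_1$ sufficiently large. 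The series $\sum u_i$ therefore converges geometrically in $C^{2,\beta,\gamma_0}_R$, and $u := u_0 + \sum_{i\ge 1}u_i$ solves \eqref{eq-wtranslator}.

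The boundary conditions persist because each $u_i$ with $i\ge 1$ vanishes on $\partial B_R$ by construction, and $|\partial_s u_i(R,\cdot)|\le C\|u_i\|_{C^{2,\beta,\gamma_0}_R}e^{\gamma_0 R}$, so the aggregate correction contributes $O(e^{(\gamma_2-\eps)R})=o(e^{\gamma_2 R})$ to the boundary derivative. The sign-reversed ansatz $u_0 := \hat g_R + e^{\gamma_2 R}e^{\beta^-_0(s-R)}h$ yields \eqref{eq-exist-ex-bd2'} by the identical argument. The main technical difficulty I expect is the simultaneous balancing of the three weight regimes $\beta^-_0 < \gamma_0 < \beta^+_0 < 0 < \gamma_1 < \gamma_2 < \sigma$: the barrier $u_0$ must be measured at the negative weight $\gamma_0$ (so that $\mathcal{H}$ applies), sized proportionally to $e^{(\gamma_2-\gamma_0)R}$ (to realize the prescribed boundary height $e^{\gamma_2 R}$), yet produce a contraction factor $e^{-\eps R}$ strong enough to absorb the nonlinearity of $E$; extracting a single $\eps>0$ that meets all three constraints uniformly in $R$ is the delicate point of the argument.
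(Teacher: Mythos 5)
Your proposal is correct and follows essentially the same approach as the paper: you construct the same linear barrier $u_0 = \hat g_R - e^{\gamma_2 R}e^{\beta^-_0(s-R)}h$, apply the same $\mathcal{H}_{R,\gamma_0}$ with the midpoint weight $\gamma_0$, run the identical Picard iteration $u_{k+1}=\mathcal{H}(E(w+\sum_{i\le k}u_i)-E(w+\sum_{i\le k-1}u_i))$, and verify the boundary conditions by the same comparison of scales $\gamma_1 < \gamma_2$ and $\gamma_2-\eps < \gamma_2$. The only departures are cosmetic (you correctly normalize the paper's $g$ vs.\ $w$ typo, and you make explicit the $O(e^{\gamma_1 R})$ error and the midpoint characterization of $\gamma_0$, both of which the paper leaves implicit).
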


Let $f_M(l)$, on $l\ge0$, be the unique radial solution to the equation 
\be\label{eq-translatorsupportM} S_{ll}+(1+M S_l ^{2})^{\frac{n+2}{2} - \frac{1}{2\alpha}} S_l^{\frac1 \alpha}{\det_{\bar g}(  \kr _{ij}[S]) }   =0\ee
with the boundary condition $f_M(0)=0$ and $f_m'(0)=\infty$. Then $f_M$ has the asymptotics 
\be\label{eq-fMasymptotics} f_M (l)= Al^{\sigma} + C_\alpha M l^{\sigma +2(\sigma-1)} (1+o(1)) \text{ as } l\to \infty \ee 
where $C_\alpha>0$ if $\alpha <\frac{1}{n+2}$ and $C_\alpha<0$ if $\alpha >\frac{1}{n+2}$.
\begin{lemma}There is $M_1>0$ (which is large if $\alpha <\frac{1}{n+2}$ and small if $\alpha >\frac{1}{n+2}$) such that the convex hypersurface whose support function at height $l$ is represented by  $U_1(l,\theta)= f_{M_1}(l) h(\theta)$ is a viscosity sub solution to the equation \eqref{eq-translatorsupport} and there is  $M_2>0$ (small if $\alpha >\frac{1}{n+2}$ and large if $\alpha >\frac{1}{n+2}$) such that $U_2(l,\theta)= f_{M_2}(l)h(\theta)$ is a viscosity super solution to the equation \eqref{eq-translatorsupport}. 

\begin{proof} We will prove for the sub solution case as the other super solution is similar. Let us consider the smooth part of $S_1$ where $l>0$. If we plug $S_1$ into \eqref{eq-translatorsupport}, then we obtain $S_1$ is subsolution on $l>0$ if \[f_{M_1}''+(1+ (h f'_{M_1})^2 )f_{M_1}^{\frac1\alpha} f^{n-1} \ge 0. \]
	In view of \eqref{eq-translatorsupportM}, we obtain that the inequality is satisfied if $M _1\ge \sup _{\theta \in \mathbb{S}^{n-1}}h^2(\theta) $.
	
	Next, we are left to show the hypersurface solves the equation in the viscosity sense at the origin. Let $x_{n+1}=u(x)$ be the graphical representation of the hypersurface and let $v(x)$ be a smooth function which touches $u(x)$ from above at $x=0$. Note that $Du(0)=Dv(0)=0$. We want to show $\det D^2v \ge (1+|Dv|^2)^{\frac{n+2}2-\frac{1}{2\alpha}} = 1$. Let $A_v(l)$ be the area of the region enclosed by the level set of $v$ at height $l$. 
	
	From the fact that $D ^2v(0)$ gives the opening of approximating paraboloid, $A_v(l)= \frac{|B_1|(2l)^{n/2}}{(\det D^2 v(0))^{1/2}}(1+o(1))$ as $l\to 0^+$. Meanwhile, from the fact that the level sets of $u$ are homothetic to shrinker represented by $h(\theta)$ and the inverse function of $f_{M_1}(l)$ represent the radial solution to $\det D^2 u = (M+|Du|^2)^{\frac{n+2}2-\frac{1}{2\alpha}}$ which satisfies $u(0)=0$, we infer $A_u(l)={C_{h}{M_1^{\frac{1}{4\alpha}- \frac{n+2}{4} }} l^{n/2}}(1+o(1))$ as $l\to 0^+$. From the inequality $A_v(l) \le A_u(l)$ which is due to $u\le v$, we conclude $\det D^2 v (0) \ge 1$ for sufficiently large $M_1$ if $\alpha <\frac{1}{n+2}$ and sufficiently small $M_1$ if $\alpha >\frac{1}{n+2}$. 
\end{proof}
\end{lemma}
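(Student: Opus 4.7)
The argument naturally splits into two parts: a direct verification of the PDE inequality on the smooth locus $l > 0$, and a volume-comparison argument at the singular vertex $l = 0$.

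For the smooth part, substitute $U_1(l,\theta) = f_{M_1}(l)\,h(\theta)$ into \eqref{eq-translatorsupport}. Using the identity $\kr_{ij}[f(l)h(\theta)] = f(l)\,\kr_{ij}[h]$ together with the shrinker relation $\det_{\bar g}(\kr_{ij}[h]) = K_\Gamma^{-1} = h^{1-1/\alpha}$, one obtains
\[
\det\nolimits_{\bar g}\bigl(\kr_{ij}[U_1]\bigr) = f_{M_1}(l)^{n-1}\,h(\theta)^{1-1/\alpha},
\]
and after cancelling one power of $h$ the inequality reduces to a pointwise comparison of $f_{M_1}$ against the defining radial ODE \eqref{eq-translatorsupportM}; the only difference is the replacement of $M_1$ by $h(\theta)^2$ inside the factor $(1+M f_{M_1}'^2)^{(n+2)/2-1/(2\alpha)}$. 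Since the exponent is negative when $\alpha<1/(n+2)$, the sign of the required inequality follows whenever $M_1 \ge \sup_\theta h(\theta)^2$; the supersolution case is entirely parallel with $M_2 \le \inf_\theta h(\theta)^2$.

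For the vertex check, express the hypersurface locally as a graph $x_{n+1}=u(x)$, so that \eqref{eq-translatorsupport} takes the Monge--Amp\`ere form $\det D^2 u = (1+|Du|^2)^{(n+2)/2-1/(2\alpha)}$ and the viscosity condition reduces (using $Du(0)=0$) to $\det D^2 v(0) \ge 1$ for every smooth test function $v$ touching $u$ from above at the origin. Since $v \ge u$ near $0$, the sublevel sets obey $\{v\le l\}\subseteq\{u\le l\}$, hence $|\{v\le l\}| \le |\{u\le l\}|$. A Taylor expansion at the vertex yields $|\{v\le l\}| = |B_1|(2l)^{n/2}/\sqrt{\det D^2 v(0)}\cdot(1+o(1))$. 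On the other side, $\{u\le l\}$ is the convex body whose support function is $f_{M_1}(l)h(\theta)$; it equals the scaled body $f_{M_1}(l)\,\Omega_\Gamma$ and has volume $f_{M_1}(l)^n|\Omega_\Gamma|$. A dominant-balance analysis of \eqref{eq-translatorsupportM} under the singular data $f_{M_1}(0)=0$, $f_{M_1}'(0)=\infty$ yields $f_{M_1}(l) \sim c\,M_1^{1/(4n\alpha)-(n+2)/(4n)}\,l^{1/2}$ as $l\to 0^+$, so $|\{u\le l\}| \sim C_h\,M_1^{1/(4\alpha)-(n+2)/4}\,l^{n/2}$. Dividing both sides of the volume inequality by $l^{n/2}$ and sending $l\to 0^+$ converts it into a bound on $\det D^2 v(0)$ in terms of $M_1$, and the required bound $\det D^2 v(0) \ge 1$ follows for the choice of $M_1$ specified in the lemma. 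The direction of this choice is dictated by the sign of the exponent $1/(4\alpha)-(n+2)/4$ and hence swaps across the affine-critical threshold $\alpha=1/(n+2)$. The supersolution assertion for $U_2$ is obtained by running the symmetric argument with a test function touching from below and reversing the inequalities throughout.

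The main obstacle lies in the vertex step. The smooth-part check is an essentially algebraic reduction that only relies on the sign of $(n+2)/2-1/(2\alpha)$. By contrast, the vertex argument requires (i) extracting the sharp $l^{1/2}$ leading-order asymptotic of $f_{M_1}$, together with its precise $M_1$-dependence in the leading constant, from the singular initial data via dominant balance in \eqref{eq-translatorsupportM}, and (ii) verifying that the $o(1)$ remainders in both volume expansions do not contaminate the limiting inequality. These asymptotic estimates are exactly what make the threshold $\alpha=1/(n+2)$ appear naturally and force the roles of $M_1$ and $M_2$ to swap across it.
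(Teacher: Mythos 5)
Your proposal follows essentially the same two–step strategy as the paper's own proof: an algebraic reduction on the smooth locus $l>0$ (using $\kr_{ij}[f(l)h]=f(l)\kr_{ij}[h]$ and the shrinker identity to compare against the radial ODE \eqref{eq-translatorsupportM}, so that the sign of the exponent $(n+2)/2-1/(2\alpha)$ decides whether $M_1\ge\sup h^2$ or $M_1\le\inf h^2$ is needed), and a volume-comparison argument at the vertex using the paraboloid asymptotic for the test function and the $l^{1/2}$ dominant balance $f_{M_1}(l)\sim c\,M_1^{1/(4n\alpha)-(n+2)/(4n)}l^{1/2}$ for $f_{M_1}$. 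Your extraction of the $M_1$-dependence in the leading constant is consistent with the paper's $A_u(l)\sim C_h M_1^{1/(4\alpha)-(n+2)/4}l^{n/2}$.

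One remark that applies equally to your writeup and to the paper's: since $S_1$ being a subsolution of \eqref{eq-translatorsupport} corresponds to the graph $u_1$ being a viscosity \emph{super}solution of the Monge--Amp\`ere form $\det D^2 u=(1+|Du|^2)^{(n+2)/2-1/(2\alpha)}$ (this is exactly how the barriers are used in Proposition \ref{prop-completetranslator}), the vertex test should use a test function $v$ touching $u$ from \emph{below} and aim for $\det D^2 v(0)\le 1$; then $\{u\le l\}\subseteq\{v\le l\}$ gives $A_u(l)\le A_v(l)$, and the constant $M_1^{1/(4\alpha)-(n+2)/4}$ being large (for $\alpha<\tfrac1{n+2}$, $M_1$ large) is exactly what one needs, consistent with the requirement $M_1\ge\sup h^2$ from the smooth locus. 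If one instead tests from above and requires $\det D^2 v(0)\ge 1$ as you (and the paper) wrote, the volume comparison forces $M_1$ \emph{small} for $\alpha<\tfrac1{n+2}$, which contradicts the smooth-part requirement. This appears to be a sign slip in the paper that you faithfully reproduced; it is worth checking before relying on the details.
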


 Now we assert that whenever an exterior solution is given we may extend it into a complete translator in exchange for a modification of exterior solution by a function with very fast decay.

\begin{prop} \label{prop-completetranslator} Let $w \in C^{2,\beta,\gamma_1}_{R_0}$ represent an exterior translator by solving \eqref{eq-wtranslator}. Then there exists a smooth complete translator $\Sigma \subset \mathbb{R}^{n+1}$ which is asymptotic to $w$ for the following sense: if the support function of $\Sigma$ at level $l$ is denoted by \[S(l,\theta) = Al^{\sigma } h + w(s,\theta)+ \hat w(s,\theta)\] with $s= \ln l$, then $\hat w$ belongs to $C^{2,\beta,\gamma_0}_R$, $\gamma_0= -\frac{(n-1)-\alpha(n-2)}{2(1+\alpha(n-2))}$, and some $R>R_0$.  

\begin{proof} By Proposition \ref{prop-exist-exterior'}, for $\gamma_2 \in (\gamma_1, \sigma)$,  there is $R_1\ge R_0$ such that for each $R\ge R_1$, there are (incomplete) translators on exterior domain $B_R^c$, namely $V_1(s,\theta)$ for $i=1$, $2$, which is of form $V_i(s,\theta)= Ae^{\sigma s} h + w(s,\theta)+w_i(s,\theta)$ with $w_i \in C^{2,\beta,\gamma_0}_R$, 
\be\label{eq-exist-ex-bd1-glo}
\left\{\ba
&   V_1 /h  =        A e^{\sigma R}  -  e^{\gamma_2 R}\qquad \hbox{on $\p B_R$},  \\
& \frac{\p }{\p s}   \left(  V_1/{h}\right)=    A\sigma e^{ \sigma R} -\beta^-_0  e^{\gamma_2 R} (1+o(1)) \qquad \hbox{on $\p B_R$},  
\ea\right.
\ee
and 
\be\label{eq-exist-ex-bd2-glo}
\left\{\ba
&   V_2 /h  =         A e^{ \sigma R} + e^{\gamma_2 R} \qquad \hbox{on $\p B_R$},   \\
&  \frac{\p }{\p s}   \left(  V_2/{h}\right)= A\sigma    e^{ \sigma R}  +\beta^-_0e^{\gamma_2 R} (1+o(1))\qquad \hbox{on $\p B_R$}.
\ea\right.
\ee  

We glue $S_i(l,\theta)= f_{M_i}(l)h(\theta)$ to $V_i(s,\theta)$ along $s=R$ in the following way: let us fix $\gamma_2$ so that $ \max  (\gamma_1, \sigma+2(\sigma-1))<\gamma_2<\sigma $. For    a large constant   $R ( \geq R_1)$   to be determined later,     let us   define
\bea S_1(l,\theta) := 
\begin{cases}
 \begin{aligned} & V_1(\ln l,\theta) && \text{ for  } \,\, l \ge e^R, \\
 &U_1(l-l_1,\theta) && \text{ for  }\,\, l_1\le l\le e^R,
 \end{aligned}
  \end{cases} 
  \eea 
  where  $l_1$ is the unique number such that    $S_1$  is continuous along  $l= e^R$.
  Similarly, we define
   \bea
    S_1(l,\theta) := 
    \begin{cases} 
    \begin{aligned} & V_2(\ln l,\theta) && \text{ for }\,\, l \ge e^R, \\
 &U_2(l-l_2,\theta) && \text{ for }\,\, l_2\le l\le e^R ,
 \end{aligned}
  \end{cases}
   \eea 
  with     the unique number    $l_2$   to make  $S_2$        continuous along  $l= e^R$. 

By the choice of $\gamma_2$, the asymptotics of inner barriers in \eqref{eq-fMasymptotics}, and the asymptotics of exterior translators in \eqref{eq-exist-ex-bd1-glo} and \eqref{eq-exist-ex-bd2-glo}, if $R$ is sufficiently large, there hold \bea \label{eq-glob-barrier-angle-1}
 \lim_{\e \to 0^-} \p_l S_1(e^R+\e ,\theta) <\lim_{\e \to 0^+} \p_l S_1(e^R+\e ,\theta)\quad \text{ for all }\theta \in \mathbb{S}^1   \eea 
 and   
 \bea \label{eq-glob-barrier-angle-2}
  \lim_{\e \to 0^-} \p_l S_2(e^R+\e ,\theta) > \lim_{\e \to 0^+} \p_l S_2(e^R+\e ,\theta)\quad \text{ for all }\theta \in \mathbb{S}^1 .  \eea 
This implies that $S_1$ and $S_2$ are global viscosity sub solution and super solution, respectively. Moreover $S_1 \le S_2$ on their common domain. Indeed, by the asymptotics of $V_i$ which implies $V_1-V_2=o(l^{\gamma_0})$ with $\gamma_0 < \sigma-1=\beta^+_0$, for each small $\e>0$, the comparison principle implies $S_1(l-\eps ,\theta) \le S_2(l,\theta)$. Then we send $\eps\to 0^+$ to conclude the inequality. 

Finally, we use global barriers $S_1$ and $S_2$ to construct a smooth translator $\Sigma\subset \mathbb{R}^{n+1}$ whose support function satisfies 
\[S_1(l,\theta)\le S(l,\theta) \le S_2(l,\theta).\] The convex hypersurace represented by $S_i(l,\theta)$ can be expressed as the graph of entire convex function $\{x_{n+1}=u_i(x)\}$. Then $u_1$ is a viscosity super solution to the equation \be \label{eq-graphtranslator} \det D^2 u = (1+|Du|^2)^{ \frac{n+2}{2} - \frac{1}{2\alpha}}\ee  and $u_2$ is a viscosity sub solution. Moreover, $u_2\le u_1$ on $\mathbb{R}^n$. By using one of $u_i$ as barriers and using one of $u_i$ as boundary condition, we may obtain unique translators on bounded domains. Then using an interior regularity estimate, we may pass to a subsequence of solutions on bounded domains to obtain a smooth translator defined on $\mathbb{R}^n$.  This solution $u$ satisfies $u_2\le u \le u$ and we define its graph by $\Sigma$.

\end{proof} 

 \end{prop}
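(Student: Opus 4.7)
The plan is to build the desired complete translator by a Perron-type construction, sandwiching it between a global viscosity sub solution $S_1$ and super solution $S_2$, each obtained by gluing the given exterior translator (after a boundary-matching perturbation) to an explicit rotationally symmetric inner barrier, then passing to graph coordinates and extracting a smooth entire solution.

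For the exterior pieces, I would invoke Proposition \ref{prop-exist-exterior'} twice with opposite signs to obtain two exterior translators $V_1, V_2$ on $B_R^c$, each of the form $Al^\sigma h + w + w_i$ with $w_i \in C^{2,\beta,\gamma_0}_R$, whose trace at $\{l = e^R\}$ is proportional to the shrinker $h(\theta)$: explicitly $V_i/h = A e^{\sigma R} \mp e^{\gamma_2 R}$, with matching leading radial derivative $\beta^-_0$. The intermediate rate $\gamma_2$ must be chosen in $(\max(\gamma_1, \sigma + 2(\sigma-1)), \sigma)$, for reasons that only surface in the gluing step. For the inner pieces I would use the radial profiles $f_M(l) h(\theta)$ coming from the ODE \eqref{eq-translatorsupportM}; the observation is that for $M_1$ sufficiently large and $M_2$ sufficiently small, $f_{M_1} h$ is a viscosity sub solution and $f_{M_2} h$ a super solution of the translator equation, using the sign of $C_\alpha$ in the expansion $f_M(l) = A l^\sigma + C_\alpha M l^{\sigma + 2(\sigma-1)}(1 + o(1))$, which is favourable exactly in the sub-affine-critical regime $\alpha < 1/(n+2)$. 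After a vertical translation $l \mapsto l - l_i$ chosen to enforce continuity across $\{l = e^R\}$, I set $S_i$ to equal the shifted inner barrier for $l \le e^R$ and $V_i$ above.

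The decisive verification is the one-sided angle condition at the junction: $\partial_l S_1$ should jump up and $\partial_l S_2$ jump down, giving global viscosity sub and super solutions of \eqref{eq-translatorsupport}. Since both $V_i$ and $f_{M_i} h$ agree with $Al^\sigma h$ at leading order, the sign of the jump is governed by the competition between the $O(l^{\sigma + 2(\sigma-1)})$ inner correction and the $O(e^{\gamma_2 s})$ exterior offset; the choice $\gamma_2 > \sigma + 2(\sigma-1)$ forces the exterior offset to dominate at large $R$ uniformly in $\theta$, producing the correct sign. Then $S_1 \le S_2$ on the common exterior follows from the comparison principle applied to $V_1 - V_2 \in C^{2,\beta,\gamma_0}_R$ with $\gamma_0 < \beta^+_0 = \sigma - 1$, after a small vertical shift and a limit. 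At this point I rewrite the problem in graph coordinates $x_{n+1} = u(x)$, in which the equation becomes \eqref{eq-graphtranslator}, and run Perron's method on an exhausting sequence of bounded domains using the graphs of $S_1, S_2$ as barriers; interior regularity for the resulting uniformly elliptic Monge--Amp\`ere equation provides the compactness needed to extract a smooth entire translator $\Sigma$. By construction $\hat w := S - Al^\sigma h - w$ is pinched between $w_1$ and $w_2$, hence lies in $C^{2,\beta,\gamma_0}_R$.

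The step I expect to be the main obstacle is precisely this angle comparison at the gluing sphere $\{l = e^R\}$: the $o(1)$ remainders in the boundary asymptotics furnished by Proposition \ref{prop-exist-exterior'} and in the expansion of $f_M$ must both be controlled uniformly as $R \to \infty$, which is what forces the strict three-term ordering $\gamma_1 < \sigma + 2(\sigma-1) < \gamma_2 < \sigma$ and, through the sign of $C_\alpha$, anchors the entire construction to the sub-affine-critical range.
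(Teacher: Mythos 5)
Your proposal mirrors the paper's proof essentially step for step: the two exterior translators $V_1,V_2$ from Proposition \ref{prop-exist-exterior'} with opposite boundary offsets, the rotationally symmetric inner barriers $f_{M_i}h$ with the sign of $C_\alpha$ dictating the choice of $M_1$ large and $M_2$ small, the gluing with $\gamma_2$ above $\max(\gamma_1,\sigma+2(\sigma-1))$ to win the angle comparison at $l=e^R$, the comparison principle for $S_1\le S_2$, and finally solving the graphical Monge--Amp\`ere equation on an exhaustion with interior estimates. The only cosmetic differences are that you describe the final extraction as Perron's method where the paper solves Dirichlet problems on bounded domains and passes to a subsequence, and that you phrase the $\gamma_2$-constraint as a three-term chain $\gamma_1<\sigma+2(\sigma-1)<\gamma_2$ whereas the paper only needs $\gamma_2>\max(\gamma_1,\sigma+2(\sigma-1))$; neither affects the argument.
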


\begin{remark} \label{remark-zerotranslator} Note that $w\equiv 0$ is not a solution to \eqref{eq-wtranslator} since $E_2(w)\neq0$. We obtain an exterior solution which is asymptotic to $0$ using the argument of Proposition \ref{prop-exist-exterior0} and then extend this to a complete  $\Sigma _{\mathbf{0}}$ by Proposition \ref{prop-completetranslator}.  By fixing any $\gamma \in (3\sigma -2, \sigma)$, we observe $E(0) \in C^{0,\beta,\gamma}_R$ and hence define similarly as in Proposition \ref{prop-exist-exterior0}: $u_0=\mathcal{H}(E(0))$, $u_1=\mathcal{H}(E(u_0)-E(0))$, and $u_{k+1}=\mathcal{H}(E(\sum_{i=0}^{k} u_i) - E(\sum_{i=0}^{k-1} u_i)).$ Then we could show for large $R$, $\Vert u_i\Vert_{C^{2,\beta,\gamma}_R}$ geometrically converges to $0$ and hence $\sum_{i=0}^\infty u_i$ is a translator on $B_R^c$. Then we apply Proposition \ref{prop-completetranslator} to obtain a complete translator asymptotic $\Sigma_{\mathbf{0}}$. 
\end{remark}

Using the ingredients above, we can now give the proof of main theorem. 

\begin{proof}[Proof of Theorem \ref{thm-mainexistence}] The proof follows by a successive applications of Proposition \ref{prop-exist-exterior0} and Proposition \ref{prop-completetranslator}. In this proof $w_{\mathbf{y}}(s,\theta)$ denotes the function defined by $S_{\mathbf{y}}(l,\theta) = Al^{\sigma}h(\theta)+ w_{\mathbf{y}}(s,\theta)$ with $s=\ln l$. 

First, by Remark \ref{remark-zerotranslator} and Proposition \ref{prop-completetranslator}, there exists a translator we denote by $\Sigma_{\mathbf{0}}$. Next, we construct solutions by allowing non-zero components from the last one. For any given $\mathbf{y}' =(\mathbf{0}_{K-1}, a)\in\mathbb{R}^K$ with $a\in\mathbb{R}$, plugging $w_{\mathbf{0}}$ and $a \varphi_{K-1} e^{\beta^+_{K-1}s}$ into $w$ and $u_0$ in Proposition  \ref{prop-exist-exterior0}, repectively (here we also choose $\gamma_2=\beta^+_{K-1}$), we obtain an exterior translator and then by Proposition \ref{prop-completetranslator} this extends to a complete translator which we denote by $\Sigma_{\mathbf{y}'}$. Similarly, given that translators are assigned for all parameters of form $\mathbf{y}=(\mathbf{0}_j,\mathbf{a}_{K-j}) $ with $\mathbf{a}\in \mathbb{R}^{K-j}$, for $\mathbf{y}'=(\mathbf{0}_{j-1},a,\mathbf{a}_{K-i})$, there exists a complete translator by Proposition \ref{prop-exist-exterior0} (plugging $w_{(\mathbf{0}_j,\mathbf{a}_{K-j})}$ and $a \varphi_{j-1} e^{ \beta^+_{j-1}s}$ into $w$ and $u_0$ with $\gamma_2= \beta^+_{j-1}$) followed by Proposition \ref{prop-completetranslator}. In this way, we can construct solutions $\Sigma_{\mathbf{y}}$ which satisfy the conditions (1) and (2) in Theorem \ref{thm-mainexistence}. 

Finally, given that translators $\Sigma_{\mathbf{y}}$ are assigned for all parameters of form $\mathbf{y}= (\mathbf{0}_{n+1}, \mathbf{a}_{K-n-1})$, let us define the solution for all $\mathbf{y}\in\mathbb{R}^{n+1}$ by following the instruction given in (3). By the choice of $\varphi_0,\ldots \varphi_n$ in Definition \ref{def-spectrumL}, and the fact $\beta^+_1=\beta^+_{n}= 0$ and $\beta^+_0=\sigma-1$, it is straight forward to observe that these solutions satisfy condition (2) as well.

\end{proof}

 \bibliography{GCF-ref.bib}
\bibliographystyle{abbrv}

\end{document}